\documentclass[a4paper,UKenglish,cleveref, autoref, thm-restate]{lipics-v2021}

\usepackage[noend]{algorithm2e}
\usepackage{csquotes}
\usepackage{amsthm}
\usepackage{amsmath}
\usepackage{amsfonts}
\usepackage{faktor}
\usepackage{bm}
\usepackage{booktabs}
\usepackage[normalem]{ulem}
\usepackage{graphicx}
\usepackage{subcaption}
\usepackage{mathtools}
\usepackage{float}
\usepackage{capt-of} 
\usepackage[small, bold, full]{complexity}
\useunder{\uline}{\ul}{}

\usepackage{enumerate}

\usepackage[table]{xcolor}
\definecolor{gryffindor}{RGB}{220,0,1}
\definecolor{slytherin}{RGB}{26,121,42}
\definecolor{hufflepuff}{RGB}{236,185,57}
\definecolor{ravenclaw}{RGB}{14,26,164}

\usepackage[colorinlistoftodos,prependcaption,obeyFinal]{todonotes}

\title{Twisted Rational Zeros and Local-Global Principles for Linear Recurrence Sequences}

\author{Piotr {Bacik}}{University of Oxford, UK \and Max Planck Institute for Software Systems, Saarland Informatics Campus, Germany}{piotr.bacik@stcatz.ox.ac.uk}{https://orcid.org/0009-0006-0248-3204}{Supported by EPSRC grant EP/X033813/1.}

\authorrunning{P. Bacik}

\Copyright{Piotr Bacik}

\acknowledgements{I am grateful to Florian Luca (who sadly passed away during the writing of this paper) for suggesting a key idea to reduce the proofs of the main theorems in this paper to considering reductions of algebraic numbers modulo a prime. I am also grateful to Joris Nieuwveld for providing the idea for the heuristic argument in \cref{sec:conclusion}. I also thank Jo\"el Ouaknine and James Worrell for helpful comments on the layout of the paper.}

\nolinenumbers

\ccsdesc[500]{Theory of computation~Logic and verification}

\keywords{Skolem Problem, \texorpdfstring{$p$}{p}-adic Schanuel Conjecture, Skolem Conjecture, Exponential Local-Global Principle, exponential polynomial, twisted rational zero, \texorpdfstring{$p$}{p}-adic zero}

\category{}

\newtheorem{problem}[definition]{Problem}

\newcommand{\QQ}{\mathbb{Q}}

\newcommand{\pp}{\mathfrak{p}}
\renewcommand{\PP}{\mathfrak{P}}
\newcommand{\qq}{\mathfrak{q}}
\newcommand{\ZZ}{\mathbb{Z}}
\newcommand{\NN}{\mathbb{N}}
\newcommand{\KK}{\mathbb{K}}
\newcommand{\LL}{\mathbb{L}}

\renewcommand{\O}{\mathcal{O}}

\newcommand{\Alg}{\smash{\overline{\QQ}}}

\newclass{\EqSLP}{EqSLP}
\newcommand{\ord}{\mathrm{ord}}

\numberwithin{equation}{section}

\begin{document}

\maketitle

\begin{abstract}
The Skolem Problem asks whether a given linear recurrence sequence (LRS) has a zero term, and is equivalent to proving an effective version of the Skolem-Mahler-Lech theorem, which states that a non-degenerate LRS has finitely many zeros. Decidability of the Skolem Problem however, has remained open for many decades. 

Bilu \emph{et al.\@} (2022) showed that the Skolem Problem for simple LRS is decidable subject to the weak $p$-adic Schanuel Conjecture and the Skolem Conjecture (also known as the exponential local-global principle), the latter of which states that an LRS has an integer zero if and only if it has a zero modulo every integer $m$. This paper works towards understanding the Skolem Conjecture. We provide an example showing that a natural strengthening of the Skolem Conjecture, where we restrict $m$ to be a prime power, is false. This failure is accounted for by the relationship between $p$-adic zeros of LRS (which arise naturally in the proof of the Skolem--Mahler--Lech theorem and have been studied algorithmically by Bacik \emph{et al.\@} (2026)) and twisted rational zeros (introduced by Bilu \emph{et al.\@} (2025)). 

By studying this relationship further, we are able to prove the main result of this paper: a local-global principle for simultaneous zeros of two coprime LRS, subject to the $p$-adic Schanuel Conjecture. A fundamental step in proving this result is characterising when twisted rational zeros are (or are not) $p$-adic zeros for infinitely many primes $p$, which we do unconditionally. This also answers two open questions of Bilu \emph{et al.\@} (2025). Finally, we conjecture that the existence of twisted rational zeros is the only way that the aforementioned strengthened Skolem Conjecture may fail, which is supported by a heuristic argument.
\end{abstract}
\newpage

\section{Introduction}
\subsection{The Skolem Problem}
A sequence $\langle u_n \rangle_{n=0}^\infty$ of rational numbers is called a linear recurrence sequence (LRS) if it satisfies a linear recurrence relation
\begin{align} \label{eqn:LRS-rec}
    a_d u_{n+d} = a_{d-1} u_{n+d-1} + \dots + a_0 u_n \, ,
\end{align}
where $a_0, \dots, a_{d} \in \ZZ$, and $a_0,a_d \neq 0$. We say $u$ has order $d$ if $u$ satisfies \eqref{eqn:LRS-rec} and does not satisfy any recurrence of smaller length. Since $a_0 \neq 0$, the sequence extends uniquely to a bi-sequence $\langle u_n \rangle_{n=-\infty}^\infty$, and we use this extension whenever negative indices occur. An \emph{integer zero} of an LRS $u$ is an integer $n \in \ZZ$ for which $u_n = 0$. The celebrated Skolem--Mahler--Lech theorem \cite{Skolem_SML,Mahler_SML,lech_note_1953} states that the set of integer zeros $\{n \in \ZZ: u_n = 0\}$ of any LRS $u$ is a union of finitely many arithmetic progressions and a finite set.

The Skolem--Mahler--Lech theorem may be refined by the concept of non-degeneracy. Recall that every LRS has an exponential-polynomial form
\begin{align} \label{eqn:LRS-exp-poly-intro}
    u_n = \sum_{i=1}^s P_i(n) \lambda_i^n
\end{align}
where $P_1,\dots,P_s$ are polynomials with algebraic coefficients and $\lambda_1, \dots, \lambda_s$ are distinct algebraic numbers. We say $u$ is degenerate if some ratio $\lambda_i/\lambda_j$ is a root of unity for $i \neq j$, and non-degenerate otherwise. Note that any LRS may be decomposed into non-degenerate subsequences $\langle u_{Mn+\ell} \rangle_{n=0}^\infty$ for each $\ell = 0 ,\dots, M-1$, by taking $M$ to be the lowest common multiple of the orders of any ratios $\lambda_i/\lambda_j$ that are roots of unity. The core of the Skolem--Mahler--Lech theorem is that a non-degenerate LRS has finitely many integer zeros.

Unfortunately, the proof of this result is ineffective; there is no known algorithm to compute the set of zeros, or decide whether it is empty. This is exactly the well-known Skolem Problem:
\begin{problem}[Skolem Problem]
Given an LRS $u$, decide whether there exists $n \in \NN$ such that $u_n = 0$.
\end{problem}
The Skolem Problem is a central question in the study of reachability problems, appearing in areas as diverse as loop termination \cite{ouaknine_termination_2015,Karimov_2022}, control theory \cite{blondel_2000}, and theoretical biology~\cite{Soittola_1976}. However, to date, decidability of the Skolem Problem remains stubbornly open, to the point where the Skolem Problem is regularly used as a benchmark of hardness; two examples being in reachability for Markov Chains \cite{vahanwala_2024}, and in computing invariants of probabilistic polynomial programs \cite{mullner_strong_2024}.

Decidability is known, however, in some special cases; for instance, it is known for sequences of order at most 4 \cite{tijdeman_mst1984,vereshchagin_1985,bacik_completing_2025}, but open for sequences of order 5 and above. If one assumes some number-theoretic conjectures, then one can go further. For example, decidability is known for all LRS subject to a strengthening of the Cram\'er--Granville conjecture \cite{luca_2025}, though the algorithm is impractical and the conjecture is required for both termination \emph{and} correctness. 

\subsection{The exponential local-global principle} This paper is motivated by questions surrounding another conjectural result. We say that $u$ is \emph{simple} if it satisfies \eqref{eqn:LRS-exp-poly-intro} and $\deg P_i = 0$ for all $i = 1 ,\dots, s$. The following was conjectured by Skolem \cite{skolem_1937},\footnote{Skolem's original statement was imprecise, but was made more precise by subsequent authors \cite{schinzel_1977,bartolome_exponential_2013}.} and is also known as the exponential local-global principle.
\begin{conjecture}[Skolem Conjecture] \label{conj:skolem-conj}
Suppose that $u$ is a simple LRS such that for some integer $b \geq 1$, $u_n \in \ZZ[1/b]$ for all $n \in \ZZ$. Then $u$ has no integer zero if and only if there exists an integer $m \geq 1$ with $\gcd(m,b) = 1$ such that $u_n \not\equiv 0 \bmod m$ for all $n \in \ZZ$.  
\end{conjecture}
Bilu \emph{et al.} \cite{bilu_skolem_2022} show that this conjecture implies decidability of the Skolem Problem for simple LRS, if one also assumes the $p$-adic Schanuel Conjecture. We briefly explain the role of these conjectures: the Bi-Skolem Problem asks, given LRS $u$, whether there exists $n \in \ZZ$ with $u_n = 0$. Decidability of the Bi-Skolem Problem for simple LRS easily follows from the Skolem Conjecture. Indeed, the algorithm proceeds by searching for $n \in \ZZ$ with $u_n = 0$, and in parallel, searching for integer $m \geq 1$ with $u_n \not\equiv 0 \bmod m$ for all $n \in \ZZ$. Since any LRS taking values in $\ZZ[1/b]$ is ultimately periodic modulo any integer $m$ coprime to $b$, one can easily check whether $u_n \not\equiv 0 \bmod m$ for all $n \in \ZZ$ by computing $u_n$ for sufficiently many values of $n$. \cref{conj:skolem-conj} implies that this algorithm must terminate. On the other hand, the $p$-adic Schanuel Conjecture is used to prove that the Skolem Problem reduces to the Bi-Skolem Problem. Notably, the algorithm this gives for the Skolem Problem for simple LRS relies on the conjectures only for termination, not correctness.

\subsection{\texorpdfstring{$p$}{p}-adic zeros, and a strengthening of Skolem's Conjecture}

The notion of a $p$-adic zero of an LRS arises naturally, due to the role of $p$-adic numbers in the proof of the Skolem--Mahler--Lech theorem. We give a brief definition of $p$-adic numbers and $p$-adic zeros. For any prime $p$, and any non-zero rational number $x \in \QQ$, write $x = \frac{a}{b}p^r$ where $a,b,p$ are pairwise coprime. Then we define the $p$-adic valuation by $v_p(x) = r$, and $v_p(0) = \infty$, and we define the $p$-adic absolute value by $|x|_p = p^{-v_p(x)}$. 

The set of $p$-adic numbers $\QQ_p$ is defined by taking the Cauchy completion of $\QQ$ under $|\cdot|_p$, and the set of $p$-adic integers $\ZZ_p$ is the Cauchy completion of $\ZZ$ under $|\cdot|_p$. Alternatively, $\ZZ_p$ may be characterised as all $p$-adic numbers $x \in \QQ_p$ with $v_p(x) \geq 0$. Every $p$-adic integer has a hands-on representation as an infinite sum $\sum_{k=0}^\infty a_k p^k$ which is convergent with respect to $|\cdot|_p$, and where $a_k \in \{0,1,\dots,p-1\}$. For example, we have $-1/2 \in \ZZ_3$, represented by the $3$-adically convergent sum $-1/2 = \sum_{k=0}^\infty 3^k$.

We call $x \in \ZZ_p$ a $p$-adic zero of an LRS $u$, if there is a sequence of integers $n_j \in \ZZ$ such that $n_j \to x$ and $u_{n_j} \to 0$ in $p$-adic absolute value as $j \to \infty$. Equivalently, $v_p(n_j - x) \to \infty$ and $v_p(u_{n_j}) \to \infty$ as $j \to \infty$. An algorithm for computing $p$-adic zeros of an LRS was given in \cite{bacik_p-adic_conference_2026}, with termination subject to the $p$-adic Schanuel Conjecture.

Now, every integer zero of an LRS is a $p$-adic zero for every prime $p$. Suppose $u$ is an LRS satisfying \eqref{eqn:LRS-rec}, then say a prime $p$ is regular if $p\nmid a_0,a_d$. The proof of the Skolem--Mahler--Lech theorem proceeds by proving that a non-degenerate LRS $u$ satisfying \eqref{eqn:LRS-rec} has finitely many $p$-adic zeros for any regular prime $p$, therefore implying that a non-degenerate sequence has finitely many integer zeros. Motivated by the Hasse-Minkowski theorem \cite[Section 3.2]{serre_course_1973}, which is a local-global principle for quadratic forms, one might conjecture the following.
\begin{conjecture} \label{conj:failed-conj}
If $u$ is a simple LRS which has a $p$-adic zero for every regular prime $p$, then there exists $n \in \ZZ$ with $u_n = 0$.
\end{conjecture}
This corresponds to restricting $m$ to be a prime power in \cref{conj:skolem-conj}. It turns out, however, that this conjecture fails.
\begin{example} \label{ex:intro-ex}
Let $u_n = (1+2^n)(1+(-4)^n) = 1 + 2^n + (-4)^n + (-8)^n$, which satisfies
\begin{align*}
    u_{n+4} = -9 u_{n+3} + 2 u_{n+2} + 72 u_{n+1} -64 u_n \, .
\end{align*}
It is clear that $1+2^n \neq 0$ and $1+(-4)^n \neq 0$ for all $n \in \ZZ$, and so $u_n \neq 0$ for all $n \in \ZZ$. However, 0 is a $p$-adic zero for every prime $p>2$. For a proof, see \cref{ex:bad-TRZ-example}.
\end{example}
This example shows the perhaps surprising fact that \textbf{integers that are $p$-adic zeros of LRS need not be genuine integer zeros.} Taking a particular prime for a concrete example, if we consider $w_n = 1 + 2^n$, then 0 is a $3$-adic zero, witnessed by the sequence of integers $n_j = 3^j$. Indeed, we have that $v_3(n_j) \to \infty$ and $v_3(w_{n_j}) \to \infty$ as $j \to \infty$,\footnote{The fact that $v_3(w_{3^j}) \to \infty$ as $j \to \infty$ follows from the proofs in \cref{ex:bad-TRZ-example}, but it may also be proven in an elementary way by induction that $v_3(w_{3^j}) \geq j+1$. Indeed, this holds for $j=0$, establishing the base case, while in the general case, we have $w_{3^{j+1}} = (2^{3^j}+1)(2^{2\cdot 3^j}- 2^{3^j} + 1)$. By the induction hypothesis, $v_3(2^{3^j}+1) \geq j+1$, and for the second factor we have $2^{2\cdot 3^j}- 2^{3^j} + 1 \equiv (-1)^2 - (-1) + 1 \equiv 0 \bmod 3$, so $v_3(w_{3^{j+1}}) \geq j+2$, proving the claim.} identifying $0$ as a $3$-adic zero.

On the other hand, our main result of this paper is to show that by understanding this phenomenon better, we can actually prove (conditionally) a local-global principle for simultaneous zeros of LRS. Suppose that $u$ satisfies \eqref{eqn:LRS-exp-poly-intro}. Then if there exist integers $m_1,\dots,m_s$ and a root of unity $\xi \neq 1$ such that $\lambda_1^{m_1} \cdots \lambda_s^{m_s} = \xi$, we say $u$ is \emph{weakly degenerate}. Any LRS may be decomposed into subsequences which are each not weakly degenerate, and if $u$ is not weakly degenerate, we may write $u_n = P(n,\mu_1^n, \dots, \mu_r^n)$ where $\mu_1, \dots, \mu_r$ are algebraic numbers and $P$ is a Laurent polynomial (see \cref{sec:prelims:LRS}). If, in addition, we may write $v_n = Q(n, \mu_1^n, \dots, \mu_r^n)$, and $P,Q$ are coprime as Laurent polynomials, then we say $u,v$ are coprime. 
\begin{theorem}[Simultaneous Exponential Local-Global Principle for Coprime LRS] \label{thm:sim-local-global-intro}
Suppose that $u,v$ are coprime LRS that are not weakly degenerate, taking values in $\ZZ[1/b]$, for some integer $b \geq 1$. Then, subject to the $p$-adic Schanuel conjecture, there does not exist $n \in \ZZ$ with $u_n = v_n = 0$ if and only if there exists an integer $m \geq 1$ with $\gcd(m,b) =1 $ such that $(u_n,v_n) \not\equiv (0,0) \bmod m$ for all $n \in \ZZ$.

Moreover, we may take $m = (pq)^k$, for some integer $k \geq 1$ and primes $p,q$.
\end{theorem}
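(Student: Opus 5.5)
The forward direction is immediate: if $u_n=v_n=0$ for some $n$, then $(u_n,v_n)\equiv(0,0)\bmod m$ for every $m$. So all the work is in the converse. Assume $u,v$ have no common integer zero. I will produce primes $p,q$ and an integer $k$ such that the pair $(u,v)$ has no common zero modulo $(pq)^k$.

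\textbf{Step 1: reduce to $p$-adic zeros.} For a regular prime $p\nmid b$, a residue of $\bmod\ p^k$ vanishing for every $k$ yields, by compactness of $\ZZ_p$, a common $p$-adic zero of $u$ and $v$. Conversely, if $u,v$ have no common $p$-adic zero, the same compactness argument gives some $k$ with no common zero mod $p^k$. The analogous statement at $q$ then gives the modulus $q^k$, and taking the larger exponent covers both.

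The combination step needs care, because the Chinese remainder theorem does not simply multiply the obstructions. A common zero mod $(pq)^k$ is a single $n$ that is a common zero both mod $p^k$ and mod $q^k$. The plan is therefore the following. Let $Z_p$ be the finite set of common $p$-adic zeros, which is finite because $P,Q$ are coprime and the sequences are not weakly degenerate. Let $Z_q$ be the analogous set at $q$. The goal is to choose $p,q$ with $Z_p\cap Z_q=\emptyset$ in a suitable sense. Since residues of $n$ modulo $p^k$ and modulo $q^k$ are independent, the right formulation is this: the common zeros mod $p^k$ lie near $Z_p$ and also satisfy some periodicity in $n$ modulo $p-1$ (or the multiplicative orders). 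We then pick $q$ so that the residues of $n$ modulo a common period $M$ dividing both $p^k(p-1)$-type and $q^k(q-1)$-type periods are incompatible.

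\textbf{Step 2: understand where common $p$-adic zeros come from.} Under the $p$-adic Schanuel Conjecture, a common $p$-adic zero $x$ of the coprime pair lies on the intersection of the two $p$-adic analytic curves defined by $P$ and $Q$ in the torus. That intersection has dimension zero, so $x$ must be algebraic in a strong sense: it is a rational number, or more precisely a \emph{twisted rational zero}, i.e.\ $x=a/c$ with $(\mu_i^{1/c}\zeta_i)^a$ a common zero of $P,Q$ for suitable roots of unity $\zeta_i$. This is the content of the earlier results relating $p$-adic zeros to twisted rational zeros. Since $(u,v)$ has no common integer zero, every such $x$ is a non-integral twisted rational zero. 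There are only finitely many candidates, because common zeros of coprime $P,Q$ form a finite set of torsion-coset intersections.

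\textbf{Step 3: the unconditional characterisation.} This step uses the paper's key result: a twisted rational zero fails to be a $p$-adic zero for infinitely many primes $p$. I would choose $p$ and $q$ as follows. First choose a regular $p$ such that every $p$-adic common zero is a twisted rational zero $a/c$ with $c$ coprime to $p$. Then pick $q$ from the infinite set of primes for which none of the finitely many twisted rational zeros that are common $p$-adic zeros are $q$-adic zeros. Two features need to be matched: the denominator $c$ imposes a congruence condition on $n$ modulo divisors of $p-1$, and the twist imposes one modulo divisors of $q-1$. I would arrange via Chebotarev (or Dirichlet) that these conditions conflict, so that no $n$ satisfies both modulo $(pq)^k$.

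\textbf{Main obstacle.} I expect Step 3 to be the hardest part. It requires making precise that a simultaneous zero mod $(pq)^k$ forces $n$ to approximate a common twisted rational zero both $p$-adically and $q$-adically with compatible roots-of-unity data, and then choosing $q$ so that this compatibility fails. I would first try to prove that for all but finitely many primes, the $p$-adic zeros arising from a given twisted rational zero $a/c$ are exactly those of a fixed shape, determined by reducing the relevant algebraic numbers modulo a prime above $p$. Concretely, this means checking whether $\mu_i$ reduces to a $c$-th power times the prescribed root of unity in the residue field. Choosing $q$ to make this reduction fail by Chebotarev then completes the argument.
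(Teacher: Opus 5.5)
There is a genuine gap, and it is exactly where the theorem is hard. Your trivial direction, the compactness reduction, and the use of the $p$-adic Schanuel input to make common $p$-adic zeros rational (hence TRZs) are fine. The problem is Step 2, where you claim that, since $u,v$ have no common integer zero, every common $p$-adic zero is a non-integral TRZ. This is false. An integer can be a common TRZ and a common $p$-adic zero for infinitely many $p$ without being a common integer zero. The paper exhibits this for a coprime, non-weakly-degenerate pair: take $u$ as in \cref{ex:two-TRZs} and $v_n=n(2n-1)$. Then $u_0=6\neq 0$, so there is no common integer zero. Yet $0$ is a common $p$-adic zero for every odd $p$ with $\ord_p(4)$ even (e.g.\ all $p\equiv 5 \bmod 8$), and $1/2$ is one for the remaining odd $p$.

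Disposing of such integer TRZs is what the non-weak-degeneracy hypothesis is for. It is part 2 of \cref{thm:avoiding_TRZ}, proved via \cref{thm:sgobba} by choosing primes with $\ord_\PP(G)$ coprime to the orders of the roots of unity in primitive vanishing tuples, and it fails for weakly degenerate sequences. Your argument invokes that hypothesis only for a finiteness claim, which signals that the real obstruction has been missed. Indeed, if all common TRZs were non-integral you would not need two primes at all. Choose $q$ by part 1 of \cref{thm:avoiding_TRZ} so that none of the finitely many common TRZs (all of them, not only those detected by $p$) is a $q$-adic zero. By \cref{thm:simultaneous} there is then no common $q$-adic zero, and your compactness step gives the modulus $q^k$.

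Second, Step 3 never supplies the combination mechanism. ``Arrange via Chebotarev that these conditions conflict'' is the entire content of the argument. Choosing $q$ so that the TRZs detected by $p$ are not detected by $q$ does not suffice, since an integer $n$ may be $p$-adically close to one common $p$-adic zero and $q$-adically close to a different common $q$-adic zero.

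The paper resolves both issues together, in these steps:
\begin{enumerate}
\item First pick $\pp$ by part 2 of \cref{thm:avoiding_TRZ}, so that no integer common TRZ $y$ is a $\pp$-adic zero of whichever of $u,v$ does not vanish at $y$. This gives $r,m$ with $\min\{v_\pp(u_n),v_\pp(v_n)\}<m$ on every residue class modulo $p^r$ containing such a $y$.
\item On each remaining class $\ell$, the subsequences $u_{p^rn+\ell}$ and $v_{p^rn+\ell}$ have only non-integral common TRZs, since an integer TRZ $y\not\equiv \ell \bmod p^r$ becomes $(y-\ell)/p^r$.
\item Part 1 of \cref{thm:avoiding_TRZ}, applied to these finitely many subsequences, yields $\qq$ detecting none of them. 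Its proof chooses the prime so that the multiplicative orders of the relevant ratios of characteristic roots are divisible by a high power of $p$; this is precisely the congruence conflict you were looking for.
\item Bounded valuation near these TRZs, together with \cref{thm:simultaneous} and compactness away from them, bounds $\min\{v_\qq(u_n),v_\qq(v_n)\}$ on those classes. This gives the modulus $(\pp\qq)^M$.
\end{enumerate}
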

This immediately yields an algorithm (with termination subject to the $p$-adic Schanuel Conjecture) for deciding whether there exists $n \in \ZZ$ with $u_n = v_n = 0$, for $u,v$ as in the theorem. Indeed, simply search for a common integer zero $n \in \ZZ$, and in parallel search for integer $m \geq 1$ satisfying $(u_n,v_n) \not\equiv (0,0) \bmod m$; this must terminate by \cref{thm:sim-local-global-intro}. Although it is known that this problem is decidable, subject to the $p$-adic Schanuel Conjecture \cite{bacik_p-adic_conference_2026}, this yields an alternative, simpler algorithm. However, more importantly, we believe this result should serve as a stepping-stone towards proving \cref{conj:skolem-conj}, which, as mentioned earlier, would provide a route towards proving decidability of the Skolem Problem. To this end, we pose some conjectures in \cref{sec:conclusion} inspired by the results proven in this paper, with a heuristic argument in support.

We shall now say a little more about the phenomena underlying \cref{ex:bad-TRZ-example} and \cref{thm:sim-local-global-intro}. The failure of \cref{conj:failed-conj} for $u$ in \cref{ex:intro-ex} may be explained by the fact that 0 is in a certain sense, a ``hidden'' zero of $u$. In particular, it is a \emph{twisted rational zero}, a concept introduced by Bilu \emph{et al.} \cite{bilu_twisted_2025}.

\subsection{Twisted rational zeros}
Suppose that $u$ is an LRS satisfying \eqref{eqn:LRS-exp-poly-intro}. Then a rational number $x \in \QQ$ is a twisted rational zero (abbreviated to TRZ) of $u$, if for some definition\footnote{For example, both $2$ and $-2$ are valid definitions of $4^{1/2}$.} of $\lambda_1^x,\dots, \lambda_s^x$, and some roots of unity $\xi_1, \dots, \xi_s$, we have
\begin{align*}
    \sum_{i=1}^s \xi_i P_i(x) \lambda_i^x = 0 \, .
\end{align*}
We further call $x$ a trivial TRZ if $P_i(x) = 0$ for all $i= 1, \dots, s$. The main contribution of \cite{bilu_twisted_2025} was to show a version of the Skolem--Mahler--Lech theorem for TRZs; a non-degenerate LRS has finitely many TRZs.

The concept of twisted rational zeros provides a global explanation for the local behaviour of the LRS $u$ in \cref{ex:intro-ex}. One can see that $0$ is a TRZ of $u$, since $1 +(-1)\cdot 2^0 + (-4)^0 + (-1)\cdot (-8)^0 = 0$, and it is the fact that 0 is a TRZ that is to blame for the failure of \cref{conj:failed-conj}. On the other hand, a given TRZ need not be a $p$-adic zero for every prime $p$. Indeed, although $0$ is a TRZ of $w_n = 1+2^n$, we have $1+2^n \not\equiv 0 \bmod 7$ for all $n$ (indeed, $w_{n+3} \equiv w_n \bmod 7$ and $w_0,w_1,w_2 \not\equiv 0 \bmod 7$), so $w$ in fact has no $7$-adic zeros at all.

The main technical results in this paper are to give a full account of when TRZs are or are not $p$-adic zeros, for each prime $p$. In particular, we prove the following, which answers an open question of Bilu \emph{et al.} \cite[Question 6.7]{bilu_twisted_2025}.

\begin{theorem} \label{thm:avoiding-TRZs-intro}
Let $x \in \QQ$ be a non-trivial TRZ, and $u$ be a non-degenerate LRS.
\begin{enumerate}
    \item If $x \not\in \ZZ$, then there are infinitely many primes $p$ for which $x$ is not a $p$-adic zero.
    \item If $u$ is also not weakly degenerate, $x \in \ZZ$ and $u_x \neq 0$, then there are infinitely many primes $p$ for which $x$ is not a $p$-adic zero.
\end{enumerate}
\end{theorem}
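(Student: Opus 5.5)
Throughout, I would fix a number field $K$ containing all $\lambda_i$, the coefficients of the $P_i$, and enough roots of unity. For a prime $p$ I pick a prime $\mathfrak{p}$ of $K$ above $p$ that is unramified and regular for $u$, so every $\lambda_i$ is a $\mathfrak{p}$-adic unit, and let $\mathbb{F}=\O_K/\mathfrak{p}$. The key observation, as suggested in the acknowledgements, is this. If $x\in\ZZ_p$ is a $p$-adic zero witnessed by $n_j\to x$, then reducing modulo $\mathfrak{p}$ forces the residues $n_j \bmod (|\mathbb{F}^\times|\cdot p^{k})$ to stabilise to the class of $x$. Hence $u_{n_j}\equiv \sum_i P_i(x)\,\lambda_i^{x}\bmod \mathfrak{p}$, where $\lambda_i^x$ is read as the $p$-adic power. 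This is well defined because $x$ is a $p$-adic integer and $\lambda_i=\omega(\lambda_i)\langle\lambda_i\rangle$, with Teichmüller part $\omega(\lambda_i)$ raised to $x$ via $x\bmod (N\mathfrak{p}-1)$. So my plan is to show that, for infinitely many $p$, the value $U(x):=\sum_i P_i(x)\,\omega(\lambda_i)^{x \bmod (N\mathfrak p-1)}$ (or its full $p$-adic analogue) is a $\mathfrak{p}$-adic unit, or at least nonzero in $\ZZ_p$. Indeed, $p$-adic continuity of $n\mapsto u_n$ on residue classes (the Skolem--Mahler--Lech interpolation) shows that $x$ is a $p$-adic zero iff the interpolating analytic function vanishes at $x$. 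So it suffices to show $u(x)\ne0$ $p$-adically, and I will show it is already nonzero mod $\mathfrak p$.

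For part 1, write $x=a/c$ with $c>1$. I would choose primes $p\equiv 1 \bmod c$ that split completely in $K(\zeta_{c'},\lambda_i^{1/c})$ for a suitable $c'$, using Chebotarev. In $\ZZ_p$, $x$ is $a\cdot c^{-1}$, and the Teichmüller component of $\lambda_i^x$ is $\omega(\lambda_i)^{a c^{-1}}$ with $c^{-1}$ computed mod $N\mathfrak p-1$. This is a specific $c$-th root of $\omega(\lambda_i)^a$, namely a twist of $\lambda_i^{a/c}$ by a root of unity $\eta_i$ determined by Frobenius data. The point is that the TRZ relation $\sum\xi_iP_i(x)\lambda_i^x=0$ only holds for special twist vectors $(\xi_i)$. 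Chebotarev applied to the Kummer extension generated by the $\lambda_i^{1/c}$ lets me choose Frobenius classes so that the induced twist vector $(\eta_i)$ differs from every vanishing twist. Using non-degeneracy and the fact that only finitely many twist vectors give zero, the reduced sum $\sum \eta_iP_i(x)\lambda_i^x$ is then a nonzero algebraic number. Excluding the finitely many $\mathfrak p$ dividing its norm, it is nonzero mod $\mathfrak p$, and $x$ is not a $p$-adic zero.

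For part 2, $x\in\ZZ$ and the $p$-adic value of $u$ at $x$ is just $u_x\ne0$. Then for all but finitely many $p$ (those not dividing the numerator of $u_x$), $u_x\not\equiv0$ mod $p$, so $x$ is not even a zero mod $p$. This seems too easy, which suggests the intended subtlety is elsewhere. In \cref{ex:intro-ex}, $0$ is a $p$-adic zero since $u_{p^k(p-1)m}$-type terms tend to something. There the sequences $n_j$ approach $x$ only $p$-adically while $n_j \bmod (p-1)$ varies freely. So the correct reduction value is $\sum_i P_i(x)\lambda_i^{x}\omega(\lambda_i)^{t}$ for arbitrary residues $t$, and $x$ is a $p$-adic zero iff some such twisted sum vanishes $p$-adically. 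Hence in both parts the real task is to find infinitely many $p$ for which \emph{no} twist achievable by Frobenius in $\langle \lambda_1,\dots,\lambda_s\rangle \bmod \mathfrak p$ kills the sum. In part 2, not being weakly degenerate means the multiplicative group generated by the $\lambda_i$ is torsion-free modulo relations. Chebotarev/Kummer theory then lets me pick $p$ for which the reduction map on the $\mu$-lattice behaves so that the only achievable twist consistent with $p$-adic approach to $x$ is the trivial one, giving value $u_x\ne0$.

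The main obstacle is this last step. I must describe exactly which root-of-unity twists are realised by sequences $n_j\to x$ $p$-adically, namely the image of $\{\omega(\mu)^t\}$ as $t$ ranges over $\ZZ/(N\mathfrak p-1)$. I then need infinitely many primes, via Chebotarev in $K(\zeta_\ell,\mu_1^{1/\ell},\dots,\mu_r^{1/\ell})$ for suitable $\ell$, whose multiplicative orders force these twists to lie in a set on which the finitely many vanishing TRZ twists are excluded. I would attempt it by taking $\ell$ a large prime not dividing any relevant twist order, and requiring Frobenius to act so that all $\mu_i$ are non-$\ell$-th powers mod $\mathfrak p$ in compatible ways.
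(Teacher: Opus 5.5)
\textbf{There is a genuine gap: the opening reduction is false, and the step that actually proves the theorem is missing.}

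Your opening reduction is wrong, and both of your concrete arguments rest on the same mistake. Convergence $n_j\to x$ in $\ZZ_p$ gives no control over $n_j$ modulo $|\mathbb{F}^\times|$, which is prime to $p$. So there is no single value ``$u(x)$'' to test. By \cref{lem:p-adic-zero-def-equal} and \cref{lem:root-of-unity-unique}, for all but finitely many $\PP$ (with $N$ as in \cref{lem:p-adic-zero-def-equal}, taken prime to $p$), $x=a/b$ is a $\PP$-adic zero iff for \emph{some} $0\le \ell\le N-1$ the sum $\sum_i \zeta_i P_i(x)\theta_{i,b}^{a}$ vanishes. Here $\theta_{i,b}^b=\lambda_i$ and $\zeta_i$ is the root of unity with $\zeta_i\theta_{i,b}^{a-b\ell}\equiv 1 \bmod \PP$. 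Consequences:
\begin{itemize}
\item Your part~2 argument (discard the primes dividing $u_x$) fails. \cref{ex:intro-ex} is a counterexample: $u_0=4$, yet $0$ is a $p$-adic zero for every $p>2$.
\item Your part~1 argument treats the twist as one vector fixed by Frobenius, and so ignores the free residue $\ell$. It also inverts $c$ modulo $N\mathfrak p-1$ right after forcing $c\mid N\mathfrak p-1$.
\item The set of vanishing twist vectors is not finite. A vanishing twist stays vanishing after multiplying each vanishing sub-sum by an arbitrary root of unity.
\end{itemize}
So the comparison must be made on ratios inside the blocks of a primitive decomposition. This is why the paper works with $\theta_{i,b}/\theta_{[I_j],b}$ and with the finite set $\Xi$ of roots of unity occurring in primitive vanishing tuples of sub-sums. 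It is also why non-degeneracy (these ratios are not roots of unity) is exactly the hypothesis used in part~1.

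Your third paragraph does reach the right reformulation. But the decisive step is exactly what you leave as ``the main obstacle'', and the tactic you sketch points the wrong way. Kummer conditions at a large auxiliary prime dividing no twist order exclude nothing: the free residue $\ell$ can always be chosen to kill that primary part. What matters are the primes $q$ dividing $b$ or the orders of elements of $\Xi$.

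The missing idea is the divisibility condition of \cref{lem:divisibility-condition}. A $\PP$-adic zero forces $\ord_\PP(\theta_{i,b}/\theta_{[I_j],b}) \mid \ord(\xi_i)(a-b\ell)$ for every $i$ in every block $I_j$.
\begin{itemize}
\item \textbf{Part~1.} Every prime $q\mid b$ satisfies $q\nmid a-b\ell$, since $\gcd(a,b)=1$. So it suffices to take $\PP$ with $v_q(\ord_\PP(\theta_{i,b}/\theta_{j,b}))>v_q(\ord\xi)$ for all $\xi\in\Xi$, simultaneously for all pairs $i\neq j$. \cref{thm:perucca} gives a positive density of such $\PP$ using only non-degeneracy.
\item \textbf{Part~2.} Here you need the opposite: $\PP$ with $\ord_\PP(G)$ coprime to every prime dividing an order in $\Xi$. \cref{thm:sgobba} provides this precisely because $G$ is torsion-free. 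If $-1\in G$ it is impossible, since $-1$ has order $2$ modulo every odd prime; that is exactly the mechanism of \cref{ex:intro-ex}. The divisibility then collapses to $\ord_\PP(\lambda_i/\lambda_{[I_j]})\mid a-\ell$. This makes every block's twist trivial and forces $u_a=0$, contradicting $u_x\neq0$.
\end{itemize}
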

In fact, \cref{thm:avoiding-TRZs-intro} may be generalised to deal with multiple TRZs of multiple LRS simultaneously, see \cref{thm:avoiding_TRZ}. This result is used in the proof of \cref{thm:sim-local-global-intro} -- the fact that the integer $m$ in \cref{thm:sim-local-global-intro} may be taken as a power of two primes corresponds to the fact that one prime is needed to deal with integer TRZs, and another is needed for non-integer TRZs.

On the other hand, we also give a complete answer to \cite[Question 1.7]{bilu_twisted_2025} on when TRZs \emph{are} $p$-adic zeros. For this, we say a TRZ $x$ of an LRS written $u_n = P(n,\mu_1^n, \dots, \mu_r^n)$ is \emph{compatible} if there are roots of unity $\xi_1, \dots, \xi_r$ such that $P(x,\xi_1 \mu_1^x, \dots, \xi_r \mu_r^x) = 0$.
\begin{theorem} \label{thm:detecting-TRZ-intro}
Any non-trivial TRZ of a non-weakly-degenerate LRS is a $p$-adic zero for infinitely many primes $p$ if and only if it is compatible. 
\end{theorem}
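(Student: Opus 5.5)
The plan is to analyse the $p$-adic limit directly in a completion where the exponential form of $u$ can be interpolated. The two implications then become an algebraic identity (for "only if") and a prime-selection problem (for "if").

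Write $u_n = P(n,\mu_1^n,\dots,\mu_r^n)$ with $\mu_1,\dots,\mu_r$ multiplicatively independent. This uses that $u$ is not weakly degenerate; see \cref{sec:prelims:LRS}. Let $x = a/b$ in lowest terms, and let $\KK$ be a number field containing the $\mu_i$ and the coefficients of $P$.

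Call a prime $p$ \emph{good} if $p \nmid b$, $p$ is regular, and the $\mu_i$ and the coefficients of $P$ are $\pp$-adic units for every prime $\pp \mid p$ of $\KK$. All but finitely many primes are good.

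Fix a good $p$ and a prime $\pp \mid p$, with residue field of size $q = p^f$. For $\mu \in \O_\pp^\times$ write $\mu = \omega(\mu)\langle\mu\rangle$, where $\omega(\mu)$ is the Teichm\"uller representative, a $(q-1)$-th root of unity, and $\langle \mu\rangle$ is a principal unit. The map $n \mapsto \langle\mu\rangle^n$ extends analytically to $\ZZ_p$, and $\langle\mu\rangle^x$ makes sense because $p\nmid b$.

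\textbf{Only if.} Suppose $x$ is a $p$-adic zero at some good prime $p$, witnessed by integers $n_j \to x$ with $u_{n_j}\to 0$.

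1. Pass to a subsequence on which $n_j \equiv c \bmod (q-1)$ is constant.
2. Then $\mu_i^{n_j} = \omega(\mu_i)^c\langle\mu_i\rangle^{n_j} \to \omega(\mu_i)^c\langle\mu_i\rangle^x$.
3. By continuity of $P$ this gives
\[ P\bigl(x,\ \omega(\mu_1)^c\langle\mu_1\rangle^x,\dots,\omega(\mu_r)^c\langle\mu_r\rangle^x\bigr)=0 \quad\text{in } \mathbb{C}_p. \]
4. Raising to the $b$-th power, $(\langle\mu_i\rangle^x)^b = \langle\mu_i\rangle^a = \mu_i^a\,\omega(\mu_i)^{-a}$. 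So $\langle\mu_i\rangle^x$ is a $b$-th root of $\mu_i^a$ times a root of unity, and hence equals $\zeta_i\mu_i^x$ for a root of unity $\zeta_i$, whatever algebraic value of $\mu_i^x$ we fix.
5. This yields $P(x,\xi_1\mu_1^x,\dots,\xi_r\mu_r^x)=0$ with $\xi_i = \omega(\mu_i)^c\zeta_i$ roots of unity. Every quantity here is algebraic, so via an embedding $\Alg\hookrightarrow\mathbb{C}_p$ the identity holds in $\Alg$. Hence $x$ is compatible.

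So a single good prime at which $x$ is a $p$-adic zero already forces compatibility. Contrapositively, a non-compatible TRZ is a $p$-adic zero only at the finitely many bad primes, which is exactly the "only if" direction.

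\textbf{If.} Suppose $P(x,\xi_1\mu_1^x,\dots,\xi_r\mu_r^x)=0$, and set $\nu_i := \xi_i\mu_i^x$. By the computation above, it suffices to find infinitely many good $p$, each with a prime $\pp\mid p$ and an integer $c$, such that
\[ \omega(\mu_i)^c\langle\mu_i\rangle^x = \nu_i \quad\text{in } \KK(\nu_1,\dots,\nu_r)_\pp \quad\text{for all } i. \tag{$\ast$}\]
Given such $c$, use CRT, since $\gcd(q-1,p)=1$, to pick integers $n_j \equiv c \bmod (q-1)$ with $n_j\to x$ $p$-adically. Then $u_{n_j}\to P(x,\nu_1,\dots,\nu_r)=0$, so $x$ is a $p$-adic zero.

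Both sides of $(\ast)$ are algebraic with $b$-th powers differing by roots of unity, so $(\ast)$ is a congruence condition on residues modulo $\pp$. Concretely, it asks that the residues $\bar\nu_i$ equal $\bar\mu_i^{\,c'}$ for one common exponent $c'$ (with $bc' \equiv a$ on the relevant part of $\mathbb{F}_q^\times$). I would first try to enforce this with Chebotarev in $\LL = \KK(\nu_1,\dots,\nu_r,\zeta_{N})$ for a suitable $N$ built from $b$ and the orders of the $\xi_i$. The idea is to take degree-one primes $\pp$ of $\LL$ with $p \equiv 1 \bmod N$ chosen so that the $b$-part of $q-1$ is controlled, and then use the relation $\nu_i^b = \varepsilon_i\mu_i^a$ to produce $c$ modulo the $b$-part and the root-of-unity part separately.

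\textbf{Main obstacle.} The step I expect to be hardest is making this choice of $c$ simultaneous for all $i$. Because $\mathbb{F}_q^\times$ is cyclic, the residues $\bar\mu_i$ cannot be independent. I therefore expect to need the Frobenius at $\pp$ to act compatibly on all the $\nu_i$ and their relevant roots at once, via a Kummer-theoretic Chebotarev argument.
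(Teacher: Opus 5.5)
Your ``only if'' direction is correct and is essentially the paper's argument: the Teichm\"uller splitting $\mu=\omega(\mu)\langle\mu\rangle$ replaces the interpolants $F_\ell$ and \cref{lem:root-of-unity-unique}, and both show that detection at a single admissible prime already forces compatibility. Your reduction of the ``if'' direction is also right. Away from finitely many primes, $(\ast)$ is equivalent to finding one integer $c$ with $\nu_i\equiv\mu_i^{c}\bmod\pp$ for all $i$. With $\nu_i=\xi_i\theta_i^{a}$ and $\theta_i^{b}=\mu_i$, this is exactly the paper's condition $\xi_i\theta_i^{a-b\ell}\equiv 1\bmod\PP$ with $\ell=c$.

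The gap is that you never show infinitely many such primes exist. That step is the entire content of the ``if'' direction, and you leave it as the ``main obstacle''. Your sketch also never uses that $\mu_1,\dots,\mu_r$ are multiplicatively independent, and without independence the simultaneous-exponent statement is false. Take $\mu_1=2$, $\mu_2=4$ and $P=X_1^2+X_2$, so $u_n=2\cdot 4^n$. Then $P(\tfrac12,\sqrt2,-2)=0$, yet a common $c$ would give $\bar\nu_2=\bar4^{\,c}=\bar\nu_1^{\,2}=\bar2$, i.e.\ $-2\equiv 2\bmod\pp$, which is impossible for odd $p$.

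The missing idea in the paper runs as follows. Put $M=\prod_i\ord(\xi_i)$. Using the CRT and $\gcd(a,b)=1$, pick $c$ with $\gcd(c,M)=1$ and $c\equiv -a\bmod b$. Let $\eta_i$ be the power of $\xi_i$ with $\eta_i^{c}=\xi_i$. The group $G=\langle\theta_1/\eta_1,\dots,\theta_r/\eta_r\rangle$ is torsion-free: raise a putative torsion relation to the power $bM\ord(\zeta)$, and independence of the $\mu_i$ forces it to be trivial. This is where non-weak-degeneracy enters. \cref{thm:sgobba} then gives a positive density of $\PP$ with $\ord_\PP(G)$ prime to $bM$. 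By the CRT there is an integer $e\equiv 0\bmod\prod_i\ord_\PP(\theta_i/\eta_i)$ with $e\equiv c$ modulo both $b$ and $M$. Hence $e=-a+b\ell$ for some integer $\ell$, and $\theta_i^{e}\equiv\eta_i^{e}(\theta_i/\eta_i)^{e}\equiv\eta_i^{c}=\xi_i\bmod\PP$ for every $i$ at once.

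Note that what must be controlled is the orders $\ord_\PP(\theta_i/\eta_i)$, not the $b$-part of $q-1$ as your sketch suggests. The orders of the $\xi_i$ divide $q-1$ for almost all $\PP$, so $q-1$ cannot be made prime to $M$. What is needed is a genuine Kummer-theoretic density statement. Your proposal gestures at this but does not supply it.
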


Finally, in \cref{sec:conclusion}, we make some remarks on a possible route to making Theorems \ref{thm:avoiding-TRZs-intro}, \ref{thm:detecting-TRZ-intro} and \ref{thm:sim-local-global-intro} effective.

\section{Preliminaries} \label{sec:prelims}
\subsection{\texorpdfstring{$p$}{p}-adic numbers}
We briefly recall relevant notions about $p$-adic numbers. More details may be found in algebraic number theory textbooks such as \cite{neukirch_algebraic_1999}.

Given a prime number $p$, every non-zero rational number $x \in \QQ$ may be written as $x = \frac{a}{b}p^r$ for some integers $a,b$ coprime to $p$, with $b$ non-zero, and $r \in \ZZ$. We define the valuation $v_p(x) = r$, and $v_p(0) = \infty$.  From this derivation we see that $v_p$ satisfies the ultrametric inequality
$v_p(x+y) \geq \min(v_p(x),v_p(y))$ for all $x,y \in \mathbb Q$.

We define an absolute value on $\QQ$ by $|x|_p = p^{-v_p(x)}$.  The ultrametric inequality on $v_p$ translates to the 
strong triangle inequality: $|x+y|_p \leq \max(|x|_p,|y|_p)$ for all $x,y \in \mathbb Q$.
Define the set of $p$-adic numbers $\QQ_p$ as the completion of $\QQ$ with respect to $|\cdot|_p$. If one completes $\ZZ$ with respect to $|\cdot|_p$ then one gets the $p$-adic integers $\ZZ_p$, which may also be defined as the unit disc in $\QQ_p$:
\begin{align*}
\ZZ_p = \{x \in \QQ_p : |x|_p \leq 1\} = \{x \in \QQ_p : v_p(x) \geq 0 \} \, .
\end{align*}
A key property is that any $p$-adic integer $x \in \ZZ_p$ may be written as a power series $x = \sum_{n=0}^\infty a_n p^n$ where $a_n \in \{0,\dots, p-1\}$.

\subsection{Algebraic extensions of \texorpdfstring{$\QQ_p$}{Qp}}
We will require a generalisation of the above discussion to deal with algebraic numbers lying outside of $\ZZ_p$. Given a number field $\KK$, let $\O_\KK$ denote its ring of integers. Previous ideas of factorising to define a valuation $v_p$ do not carry over directly as $\O_\KK$ is no longer a unique factorisation domain, instead one generalises to consider factorisations of (fractional) ideals. Define a \emph{fractional ideal} of $\KK$ to be a non-zero finitely generated $\O_\KK$-submodule of $\KK$; equivalently, $I$ is a fractional ideal if and only if there is $c \in \KK$ such that $cI \subseteq \O_\KK$ is an ideal of $\O_\KK$. Any fractional ideal has a unique factorisation 
\begin{align*}
    I = \prod_{i=1}^t \pp_i^{n_i}
\end{align*}
where $t \in \NN$ and each $\pp \subseteq \O_\KK$ is a prime ideal, $n_i \in \ZZ$ \cite[p. 22]{neukirch_algebraic_1999}. If $I \subseteq \O_\KK$ is an ideal then $n_i \geq 0$. Define the valuation $v_\pp$ by $v_\pp(a) = n$ if the exponent of $\pp$ in the prime factorisation of the ideal $a\O_\KK$ is $n$, and $v_\pp(0) = \infty$. Any prime ideal $\pp$ has $v_\pp(p) > 0$ for a unique integer prime $p \in \NN$. We say $\pp$ ``divides'' or ``lies above'' $p$. Given a rational prime $p \in \ZZ$, we say $p$ ramifies in $\KK$ if $v_\pp(p) =e > 1$ for some prime ideal $\pp \subseteq \O_\KK$ lying above $p$; call $e = e_{\pp/p}$ the \emph{ramification index}. Define the \emph{residue field degree $f = f_{\pp/p}$} by $f = \left[\faktor{\O_\KK}{\pp}:\faktor{\ZZ}{p\ZZ}\right]$. Define the $\pp$-adic absolute value by $|x|_\pp = p^{-v_\pp(x)/e_{\pp/p}}$.\footnote{There are many conventions for the normalization of $\pp$-adic absolute values as they are equivalent and induce the same completions.} Denote the completion of $\KK$ with respect to $|\cdot|_\pp$ by $\KK_\pp$. The field $\KK_\pp$ is an algebraic extension of $\QQ_p$. Let the valuation ring be denoted by 
\begin{align*}
    \O_\pp = \{x \in \KK_\pp: v_\pp(x) \geq 0\}\, .
\end{align*}
If $S$ is a finite set of prime ideals of $\KK$, then we define the ring of $S$-integers to be
\begin{align*}
    \O_S = \O_{\KK,S} = \{x \in \KK : v_\pp(x) \geq 0 \ \text{for all prime ideals }  \pp \not\in S \} \, .
\end{align*}

\subsection{\texorpdfstring{$p$}{p}-adic exponential and logarithm}
Given a number field $\KK$ and prime ideal $\pp \subseteq \O_\KK$, we may define the $\pp$-adic exponential and logarithm functions by their series representations
\begin{align*}
    \exp(x) = \sum_{n=0}^\infty \frac{x^n}{n!} \, , \qquad \log(x) = \sum_{n=1}^\infty (-1)^{n+1} \frac{(x-1)^n}{n} \, ,
\end{align*}
where $\exp$ converges whenever $v_\pp(x) > \frac{e_{\pp/p}}{p-1}$, and $\log$ converges whenever $v_\pp(x-1) > 0$. From the series definitions, it is clear that if $v_\pp(y-1) > \frac{e_{\pp/p}}{p-1}$, then $v_\pp(\log(y)) = v_\pp(y-1) > \frac{e_{\pp/p}}{p-1}$, and therefore $\exp(x \log(y))$ converges for all $x \in \O_\pp$, and defines an analytic function in $x$. If $\exp(y)$ converges, then again from the series definition one sees that we always have $\exp(y) \equiv 1 \bmod \pp$. The usual properties hold, that $\exp(x+y) = \exp(x)\exp(y)$, $\log(xy) = \log(x)+\log(y)$, $\exp(\log(x)) = x$, and $\log(\exp(x)) = x$ whenever the quantities converge.

\subsection{Multiplicative groups and reductions of algebraic numbers}
By the structure theorem for finitely generated abelian groups, any finitely generated multiplicative subgroup $G \leq \KK^\times$ has $G \cong \ZZ^r \times \Gamma$, where $r$ is the \emph{rank} of $G$ and $\Gamma$ is a finite group. We say $G$ is \emph{torsion-free} if $\Gamma$ is trivial. 

For $a \in \KK$ and prime ideal $\pp \subseteq \O_\KK$ such that $v_\pp(a) = 0$, we define\footnote{Caution: other sources sometimes write $\ord_\pp(a)$ to mean the $\pp$-adic valuation of $a$.} $\ord_\pp(a)$ to be the smallest positive integer satisfying $a^{\ord_\pp(a)} \equiv 1 \bmod \pp$. For a multiplicative subgroup $G \leq \KK^\times$, suppose $v_\pp(a) = 0$ for all $a \in G$, then we may define $\overline{G}$ to be the subgroup of $\left(\faktor{\O_\KK}{\pp} \right)^\times$ given by the reduction of $G$ mod $\pp$. We write $\ord_\pp(G) \coloneq |\overline G|$.

Given a prime ideal $\pp \subseteq \O_\KK$, define the norm to be $N(\pp) =  \left| \faktor{\O_\KK}{\pp} \right|$. Recall that, given a set $A$ of prime ideals of $\O_\KK$, the Dirichlet density of $A$ is
\begin{align*}
    \lim_{s \to 1^+} \frac{\sum_{\pp \in A} N(\pp)^{-s}}{\sum_{\pp \subseteq \O_\KK} N(\pp)^{-s} } \, .
\end{align*}
We may also define the natural density
\begin{align*}
    \lim_{x \to \infty} \frac{\# \{\pp \in A \mid N(\pp) \leq x \}}{\#\{\pp \subseteq \O_\KK \mid N(\pp) \leq x \}} \, ,
\end{align*}
and when both exist, the Dirichlet density of $A$ equals the natural density of $A$~\cite[Section~13]{neukirch_algebraic_1999}. Note that the existence of the natural density implies the existence of the Dirichlet density.

A key part of the proofs of our main results are on prescribing certain conditions on orders of algebraic numbers when reduced modulo a prime. Such considerations are related to Artin's Conjecture on primitive roots (see the survey \cite{Moree_2012}). We state two results of Perucca and Sgobba, based on Kummer theory and Chebotarev's Density Theorem. The following is a special case of \cite[Corollary 14]{perucca_2009}, obtained by setting $G_i(K) = K^\times$ in the original notation.
\begin{theorem} \label{thm:perucca}
Let $\KK$ be a number field and for every $i=1, \dots, n$, let $R_i \in \KK^\times$ be an algebraic number that is not a root of unity. Then for each integer $m > 0$ there exists a positive Dirichlet density of primes $\pp \subseteq \O_{\KK}$ such that for each $i = 1 , \dots, n$, we have $\ord_\pp(R_i)$ is a multiple of $m$.
\end{theorem}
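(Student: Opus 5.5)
Although the paper cites this result from Perucca's work, here is how I would prove it directly, via Kummer theory and the Chebotarev density theorem.

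\textbf{Reduction to prime powers.} Write $m = \prod_{\ell} \ell^{a_\ell}$. It suffices to find a positive density of primes $\pp$ such that, for every $i$ and every $\ell \mid m$, the order $\ord_\pp(R_i)$ is divisible by $\ell^{a_\ell}$. Away from the finitely many $\pp$ dividing numerators or denominators of the $R_i$, or dividing $m$, the reduction of $R_i$ lies in the cyclic group $(\O_\KK/\pp)^\times$ of order $N(\pp)-1$. The condition $\ell^{a} \mid \ord_\pp(R_i)$ then says: if $\ell^{t} \,\|\, N(\pp)-1$, the reduction of $R_i$ is not an $\ell^{t-a+1}$-th power in $(\O_\KK/\pp)^\times$.

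\textbf{Encoding as a Frobenius condition.} For each $\ell \mid m$ choose a large exponent $T_\ell \geq a_\ell$. Consider the Kummer extension
\[
\LL = \KK\big(\zeta_{N}, R_1^{1/N}, \dots, R_n^{1/N}\big), \qquad N = \prod_{\ell \mid m} \ell^{T_\ell + a_\ell}.
\]
Take primes $\pp$ whose Frobenius fixes $\zeta_{\ell^{T_\ell}}$ but not $\zeta_{\ell^{T_\ell+1}}$. Then $\ell^{T_\ell} \,\|\, N(\pp)-1$ exactly.

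For such $\pp$, $R_i$ is an $\ell^{k}$-th power mod $\pp$, with $k \leq T_\ell$, if and only if $\pp$ splits completely in $\KK(\zeta_{\ell^{T_\ell}}, R_i^{1/\ell^k})$. So we need a Frobenius element $\sigma \in \mathrm{Gal}(\LL/\KK)$ that:
\begin{itemize}
\item acts on $\zeta_{\ell^{T_\ell+1}}$ nontrivially but fixes $\zeta_{\ell^{T_\ell}}$, and
\item does not fix any root $R_i^{1/\ell^{T_\ell - a_\ell + 1}}$.
\end{itemize}
Once such a $\sigma$ exists, Chebotarev gives a positive Dirichlet density of $\pp$ with that Frobenius class, and the theorem follows.

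\textbf{Main obstacle: existence of $\sigma$.} The difficulty is showing that this Galois element exists, i.e.\ that the Kummer part is large enough independently of the cyclotomic part. Since $R_i$ is not a root of unity, $R_i$ is not an $\ell^{k}$-th power in $\KK(\zeta_{\ell^\infty})$ for all large $k$. The $R_i$ may be multiplicatively dependent, but that is harmless: we only need each $R_i$ individually to avoid being too divisible.

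Concretely, let $h_i$ be the maximal $k$ such that $R_i$ is an $\ell^k$-th power in $\KK(\zeta_{\ell^\infty})$ times a root of unity. This $h_i$ is finite, since $R_i$ has a nonzero height, or a nonzero valuation at some place up to torsion. Choose $T_\ell$ with $T_\ell - a_\ell + 1 > \max_i h_i + c$, where $c$ is a bounded correction for the roots of unity in $\KK$.

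Over $\KK(\zeta_{\ell^{T_\ell+1}})$, the Kummer group generated by the $R_i$ modulo $\ell^{T}$-th powers is a finite abelian $\ell$-group. We need some $\tau$ in the Kummer Galois group that moves each prescribed root $R_i^{1/\ell^{T_\ell - a_\ell+1}}$. Each such condition excludes a proper subgroup of the Kummer Galois group. However, a finite group can be a union of proper subgroups, so avoiding them needs more than a counting bound.

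I would handle this by using the fact that the group is a $\ZZ/\ell^{T}$-dual of the Kummer module. I would pick $\tau$ corresponding to a character that is nonzero on each $R_i$ at the needed level. Such a character exists because each $R_i$ has order exceeding $\ell^{a_\ell - 1}$ times its divisibility in the Kummer module: take a "generic" character, i.e.\ an injective-on-cyclic-parts pairing.

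I would then lift $\tau$ together with the required cyclotomic action, using compatibility across the tower. Finally, I would combine the choices over the different $\ell$ via the Chinese remainder theorem on the Galois groups. The fields for distinct $\ell$ are only nearly linearly disjoint, so I would absorb the finite entanglement into the choice of $T_\ell$.
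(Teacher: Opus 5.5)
The paper gives no argument of its own here; it imports the statement as a special case of Perucca's Corollary~14, so your outline has to stand on its own.

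\textbf{What is sound.} The reduction is correct. Away from finitely many $\pp$, the condition $\ell^{a}\mid\ord_\pp(R_i)$, with $\ell^{T}\,\|\,N(\pp)-1$, is exactly a Frobenius condition in $\KK(\zeta_{\ell^{T+1}},R_i^{1/\ell^{T}})$. Chebotarev then reduces everything to exhibiting one suitable $\sigma$.

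\textbf{The gap.} Existence of $\sigma$ is the whole content of the theorem, and the mechanism you propose for it fails.
\begin{enumerate}
\item A character that is ``injective on the cyclic parts'' need not be nontrivial on the $R_i$. Take odd $\ell$, $\KK=\QQ$, $R_1=2$, $R_2=3$, $R_3=2/3$. The Kummer automorphism multiplying both $2^{1/\ell^T}$ and $3^{1/\ell^T}$ by $\zeta_{\ell^T}$ fixes every root of $2/3$. Large orders of the $\bar R_i$ individually do not give a simultaneous choice.
\item Your $\tau$ fixes $\zeta_{\ell^{T+1}}$, so the element you actually need is $\sigma=\rho\tau$, with $\rho$ realising the cyclotomic condition. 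But $\rho$ itself sends $R_i^{1/\ell^{k}}$ (where $k=T-a+1$) to $\omega_iR_i^{1/\ell^{k}}$, and entanglement can force $\omega_i\neq 1$. For $R=2$, $\ell=2$, $T=2$, $a=1$: anything fixing $i$ and $2^{1/4}$ fixes $\zeta_8=(1+i)/\sqrt2$. So the requirement on $\tau$ is $\chi_\tau(R_i)^{\ell^{a-1}}\neq\omega_i^{-1}$. That means avoiding \emph{cosets}, not subgroups, and ``$\chi_\tau$ nonzero on $R_i$'' does not address it.
\item The cross-$\ell$ entanglement is only asserted to be absorbable, not argued.
\end{enumerate}

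\textbf{How to close it.} The counting bound you set aside is exactly what works.
\begin{enumerate}
\item Work over $F=\KK(\zeta_M)$ with $M=\prod_{\ell\mid m}\ell^{T_\ell+1}$, so all $\ell$ are handled at once. Since $\mathrm{Gal}(\KK(\zeta_\infty)/\KK)$ is open in $\hat{\ZZ}^\times$, for $T_\ell$ large there is $\rho$ with $\ell^{T_\ell}\,\|\,\chi_{\mathrm{cyc}}(\rho)-1$ for every $\ell\mid m$ simultaneously.
\item By Kummer theory, the Galois group of $F(R_1^{1/E},\dots,R_n^{1/E})/F$, with $E=\prod\ell^{T_\ell}$, is the full dual of the Kummer module $W$. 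Hence $\tau\mapsto$ ($\ell$-part of $\chi_\tau(R_i)$) is a surjective homomorphism onto a cyclic group of order $o_{i,\ell}$.
\item If $R_i^{\ell^j}\in F^{\times\ell^{T_\ell}}$, then $R_i$ is a root of unity times an $\ell^{T_\ell-j}$-th power in $F$. So $o_{i,\ell}\ge\ell^{T_\ell-h_{i,\ell}}$, where $h_{i,\ell}$ is the $\ell$-divisibility of $R_i$ modulo roots of unity in $F$.
\item Each bad coset therefore has proportion at most $\ell^{a_\ell-1+h_{i,\ell}-T_\ell}$. A union bound over the pairs $(i,\ell)$ leaves a good $\tau$, for any fixed $\rho$, once the $T_\ell$ are large.
\end{enumerate}

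\textbf{Finiteness of $h_{i,\ell}$.} This argument needs $h_{i,\ell}$ to be finite in abelian extensions of $\KK$, and your justification is too weak. A positive height does not directly bound divisibility in the infinite field $\KK(\zeta_{\ell^\infty})$; that would require a Bogomolov-type theorem such as Amoroso--Zannier. Units have no nonzero valuation away from $\ell$, so the valuation argument also fails for them. The standard route is Schinzel's theorem on abelian binomials: if $R_i$ is an $\ell^k$-th power in an abelian extension of $\KK$, then $R_i^{w}\in\KK^{\times\ell^k}$ with $w=\#\mu_\KK$. Since $R_i^w$ is not a root of unity, Northcott in $\KK$ then bounds $k$.
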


We also make use of the following, which is \cite[Theorem 21]{sgobba_divisibility_2023}, obtained by setting $C$ to $\mathrm{Gal}(F/K)$. In fact, they give explicit estimates for the density and an effective version, though we omit this here, as we do not need it. 

\begin{theorem} \label{thm:sgobba}
Let $\KK$ be a number field and let $G$ be a finitely generated and torsion-free subgroup of $\KK^\times$ of positive rank. Let $\mathbb{F}/\KK$ be a finite Galois extension. Consider a finite set $L$ of prime numbers $\ell$, and for each of them fix a non-negative integer $a_\ell$. Consider the set of primes of $\KK$ given by
\begin{align*}
    \mathcal V = \left \{ \pp \subseteq \O_\KK : v_\pp(g) = 0 \ \forall g \in G \, , \, \pp \textnormal{ is unramified in } \mathbb{F}\, , \textnormal{ and } v_\ell(\ord_\pp(G)) = a_\ell \ \forall \ell \in L \right\} \, .
\end{align*}
Then $\mathcal V$ has positive natural density. 
\end{theorem}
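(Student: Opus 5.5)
The plan is to reduce each condition $v_\ell(\ord_\pp(G)) = a_\ell$ to a condition on the Frobenius of $\pp$ in a suitable Kummer extension of $\KK$, and then apply Chebotarev's Density Theorem, which gives natural densities. The requirements that $v_\pp(g)=0$ for all $g \in G$ and that $\pp$ be unramified in $\mathbb F$ each exclude only finitely many primes, since $G$ is finitely generated. So they do not affect natural density, and I will drop them.

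\textbf{Step 1: translate the order condition into power conditions.} Fix $\ell \in L$ and write $k_\pp = \O_\KK/\pp$, whose multiplicative group is cyclic of order $N(\pp)-1$. Suppose $v_\ell(N(\pp)-1) = n_\ell$ exactly, where $n_\ell > a_\ell$ is chosen in advance. Then the $\ell$-part of $|\overline G|$ equals $\ell^{a_\ell}$ if and only if two things hold:
\begin{itemize}
\item $\overline G$ is contained in the subgroup of $\ell^{n_\ell - a_\ell}$-th powers;
\item $\overline G$ is not contained in the subgroup of $\ell^{n_\ell - a_\ell + 1}$-th powers.
\end{itemize}
This is because in a cyclic group whose $\ell$-part has order $\ell^{n}$, the subgroup of $\ell^j$-th powers is exactly the subgroup whose $\ell$-part has order dividing $\ell^{n-j}$.

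\textbf{Step 2: turn these into Frobenius conditions.} For $\pp$ unramified, the condition $v_\ell(N(\pp)-1) = n_\ell$ says that $\mathrm{Frob}_\pp$ fixes $\zeta_{\ell^{n_\ell}}$ but not $\zeta_{\ell^{n_\ell+1}}$. Given that $\mu_{\ell^{n_\ell}} \subseteq k_\pp$, the condition that $g$ is an $\ell^j$-th power mod $\pp$ (for $j \le n_\ell$) says that $\mathrm{Frob}_\pp$ fixes the Kummer extension $\KK(\zeta_{\ell^{n_\ell}}, g^{1/\ell^j})$ pointwise. It suffices to check this on a finite generating set $g_1,\dots,g_r$ of $G$.

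Let $M$ be the compositum over $\ell \in L$ of the fields $\KK(\zeta_{\ell^{n_\ell+1}}, G^{1/\ell^{n_\ell}})$; this is a finite Galois extension of $\KK$. The set of primes satisfying all the conditions then coincides, up to finitely many exceptions, with the set of $\pp$ whose Frobenius lies in a conjugation-stable subset $C \subseteq \mathrm{Gal}(M/\KK)$. Chebotarev gives this set natural density $|C|/[M:\KK]$. Since the fields attached to distinct $\ell$ have coprime-degree-like behaviour only up to bounded entanglement, I will treat the compositum directly rather than assuming independence.

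\textbf{Step 3: show $C \neq \emptyset$ (main obstacle).} We need a $\sigma \in \mathrm{Gal}(M/\KK)$ that, for each $\ell \in L$:
\begin{itemize}
\item fixes $\zeta_{\ell^{n_\ell}}$ but moves $\zeta_{\ell^{n_\ell+1}}$;
\item fixes all of $G^{1/\ell^{n_\ell-a_\ell}}$;
\item moves some element of $G^{1/\ell^{n_\ell-a_\ell+1}}$.
\end{itemize}
The danger is entanglement between cyclotomic and Kummer extensions, as in $\sqrt 2 \in \QQ(\zeta_8)$, or between different $\ell$. Here I would use the torsion-freeness and positive rank of $G$ together with the standard bounded-failure result for Kummer degrees (Bashmakov--Ribet type; in Perucca--Sgobba's form). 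It says that
\begin{align*}
[\KK(\zeta_{\ell^N}, G^{1/\ell^{m}}) : \KK(\zeta_{\ell^N})]
\end{align*}
equals $|G/G^{\ell^m}| = \ell^{mr}$ up to a factor bounded independently of $m$ and $N$ (for $N \ge m$), and that the cyclotomic degrees $[\KK(\zeta_{\ell^{N+1}}):\KK(\zeta_{\ell^N})]$ equal $\ell$ for $N$ large. Choosing every $n_\ell$ sufficiently large, the entanglement becomes concentrated in the bottom levels. The top levels of the tower then behave freely: the Galois group of $M$ over $\KK(\zeta_{\ell^{n_\ell}}, G^{1/\ell^{n_\ell - a_\ell}})_{\ell \in L}$ surjects onto a product of nontrivial groups, one acting on $\zeta_{\ell^{n_\ell+1}}$ and one on a root $g_1^{1/\ell^{n_\ell-a_\ell+1}}$. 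Combining these by a degree count produces the required $\sigma$. Carrying out this count carefully, including the case $\ell = 2$, is where I expect the real work to lie. The density of $\mathcal V$ is then at least $|C|/[M:\KK] > 0$.
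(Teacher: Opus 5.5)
\textbf{Verdict: right strategy, but Step 1 is wrong in the case the paper uses and Step 3 is not carried out.} The paper does not prove this statement. It quotes it from Sgobba (Theorem 21 there, with $C$ the whole Galois group), whose argument is Kummer theory plus Chebotarev, so your approach matches the source.

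\textbf{Step 1 fails for $a_\ell = 0$.} The paper only ever uses $a_\ell = 0$ (Case 2 in the proof of \cref{thm:avoiding_TRZ}, and the proof of \cref{thm:detecting-TRZ}). If $v_\ell(N(\pp)-1) = n_\ell$, then in the cyclic group $(\O_\KK/\pp)^\times$ the $\ell^{n_\ell+1}$-th powers and the $\ell^{n_\ell}$-th powers form the same subgroup. So for $a_\ell = 0$ your two bullets contradict each other and describe the empty set. Your justification (the $\ell^j$-th powers are the elements whose $\ell$-part has order dividing $\ell^{n-j}$) holds only for $j \le n_\ell$. The same restriction applies to the Kummer dictionary of Step 2, which needs $\mu_{\ell^j} \subseteq \O_\KK/\pp$. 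For $a_\ell = 0$ you must impose the first bullet alone.

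\textbf{Step 3 needs more than the single-$\ell$ degree bound you quote.} That bound says nothing about entanglement between different $\ell \in L$. For example, over $\QQ$ with $G = \langle -3 \rangle$, $\sqrt{-3}$ lies in both $\QQ(\zeta_3)$ and $\QQ(G^{1/2})$. Knowing that each relevant layer is a nontrivial extension also does not produce $\sigma$ once there are three or more conditions, since a group can be a union of three proper subgroups. The missing ingredient is that the image of $\mathrm{Gal}(\KK(\zeta_{\ell^\infty}, G^{1/\ell^\infty} : \ell \in L)/\KK)$ in $\prod_{\ell \in L} (\ZZ_\ell^r \rtimes \ZZ_\ell^\times)$ is open. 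Here $(v,u)$ acts by $\zeta_{\ell^k} \mapsto \zeta_{\ell^k}^{u}$ and $g_i^{1/\ell^k} \mapsto \zeta_{\ell^k}^{v_i} g_i^{1/\ell^k}$, for a basis $g_1,\dots,g_r$ of $G$ and compatible choices of roots.
\begin{itemize}
\item For one $\ell$, openness follows from your bounded-index Kummer statement together with openness of the cyclotomic image.
\item Across different $\ell$, it follows from Goursat's lemma by induction on $|L|$. A profinite group that is a quotient of both a virtually pro-$L'$ group and a virtually pro-$\ell$ group with $\ell \notin L'$ is finite.
\end{itemize}
Once you have openness, no degree count is needed. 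Take $n_\ell - a_\ell$ large for every $\ell \in L$. The element whose $\ell$-component has $u = 1+\ell^{n_\ell}$ and $v = \ell^{n_\ell - a_\ell} e_1$ (with $e_1$ the first basis vector of $\ZZ_\ell^r$) lies in the image. It meets every requirement: it fixes $\zeta_{\ell^{n_\ell}}$ and moves $\zeta_{\ell^{n_\ell+1}}$, it fixes $G^{1/\ell^{n_\ell-a_\ell}}$, and for $a_\ell \ge 1$ it multiplies $g_1^{1/\ell^{n_\ell-a_\ell+1}}$ by $\zeta_\ell$.

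\textbf{Minor point: existence of the density.} Chebotarev at fixed $(n_\ell)_{\ell \in L}$ only shows that $\mathcal V$ has positive lower density. To show the natural density exists, sum over all $(n_\ell)$. Then bound the tail using the fact that $\{\pp : \ell^n \mid N(\pp)-1\}$ has density $1/[\KK(\zeta_{\ell^n}):\KK] \to 0$.
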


\subsection{Linear Recurrence Sequences} \label{sec:prelims:LRS}
An LRS is a sequence $\langle u_n \rangle_{n=0}^\infty$ of algebraic numbers satisfying a recurrence relation
\begin{align*}
    u_{n+d} = a_{d-1} u_{n+d-1} + \dots + a_0 u_n \, ,
\end{align*}
where $a_0, \dots, a_{d-1} \in \Alg$ and $a_0 \neq 0$. If $\KK$ is a number field, and $u_n \in \KK$ for all $n \in \ZZ$, and $a_0, \dots, a_{d-1} \in \KK$, then we call $u$ a $\KK$-LRS. Any LRS $u$ has an exponential polynomial form given by
\begin{align} \label{eqn:LRS-exp-poly}
    u_n = \sum_{i=1}^s P_i(n) \lambda_i^n \, ,
\end{align}
where $P_i \in \Alg[X]$ and $\lambda_i \in \Alg$ for each $i=1,\dots, s$. Let $\LL$ be a number field containing each $\lambda_i$ and the coefficients of $P_i$ for $i=1, \dots, s$.

\begin{definition}
We say that an LRS $u$ is degenerate if $\lambda_i/\lambda_j$ is a root of unity for some $i \neq j$. We say $u$ is weakly degenerate if $G  = \langle \lambda_1, \dots, \lambda_s \rangle \subseteq \LL^\times$ is not torsion-free. 
\end{definition}
A degenerate sequence is obviously weakly degenerate, but the converse need not hold. For example $u_n = 2^n + (-1)^n$ is non-degenerate but is weakly degenerate. 

If $G = \langle \lambda_1, \dots, \lambda_s \rangle \cong \ZZ^r \times \Gamma$, where $\Gamma$ is a finite group of order $M$, then $\langle \lambda_1^M,\dots, \lambda_s^M \rangle \cong \ZZ^r \times M\Gamma \cong \ZZ^r$. Therefore, every subsequence $\langle u_{Mn+\ell} \rangle_{n=0}^\infty$ for $0 \leq \ell \leq M-1$ is not weakly degenerate. Note that it is possible to compute all multiplicative relations between $\lambda_1, \dots, \lambda_s$~\cite{combot_computing_2025} --- that is, the set of $a_1, \dots, a_s \in \ZZ$ such that $\lambda_1^{a_1} \cdots \lambda_s^{a_s} = 1$. Therefore, one may effectively compute multiplicatively independent algebraic numbers $\mu_1, \dots, \mu_r \in \Alg$ such that each $\lambda_i$ may be written as a product $\lambda_i = \zeta_i \prod_{j=1}^r \mu_j^{m_{i,j}}$, for some $m_{i,j} \in \ZZ$ and some root of unity $\zeta_i$. Hence, the order $M$ of $\Gamma$ may be computed as $M = \mathrm{lcm}(\ord(\zeta_1),\dots,\ord(\zeta_s))$ and thus each subsequence $u_{Mn + \ell}$ is not weakly degenerate, and has a computable representation as
\begin{align*}
    u_{Mn+\ell} = Q_\ell(n,\mu_1^{Mn},\dots,\mu_r^{Mn}) \, ,
\end{align*}
for some Laurent polynomial $Q_\ell \in \Alg[X_0^{\pm 1}, \dots, X_r^{\pm 1}]$.

Recall \cite{bilu_twisted_2025} that a \emph{twisted rational zero} of an LRS $u$ given by \eqref{eqn:LRS-exp-poly} is a rational number $x \in \QQ$ if for some definition of $\lambda_1^x, \dots, \lambda_s^x$, we have roots of unity $\xi_1, \dots, \xi_s$ such that
\begin{align} \label{eqn:TRZ-vanish}
    \sum_{i=1}^s \xi_i P_i(x) \lambda_i^x = 0 \, .
\end{align}
The TRZ $x$ is \emph{trivial} if $P_i(x) = 0 $ for all $1 \leq i \leq s$ and non-trivial otherwise. If a non-weakly-degenerate LRS is given by $u_n = P(n,\mu_1^n, \dots, \mu_r^n)$ for Laurent polynomial $P$, then we say a TRZ $x \in \QQ$ is \emph{compatible} with $u$ if there are roots of unity $\xi_1, \dots, \xi_r$ such that for some definition of $\mu_1^x,\dots,\mu_r^x$, we have $P(x,\xi_1\mu_1^x,\dots, \xi_r \mu_r^x) = 0$.

We call a tuple $(x, \xi_1, \dots, \xi_s)$ a \emph{vanishing tuple} for $u$, if \eqref{eqn:TRZ-vanish} holds. We also say that $(x, \xi_1, \dots, \xi_s)$ is associated to $x$.

We further call $(x,\xi_1, \dots, \xi_s)$ \emph{primitive} if \eqref{eqn:TRZ-vanish} holds, $\xi_s = 1$, and no proper sub-sum vanishes; that is, there is no $\varnothing \neq I \subsetneq \{1, \dots, s\}$ such that $\sum_{i \in I} \xi_i P_i(q) \lambda_i^q = 0$.\footnote{This borrows terminology from \cite{bilu_twisted_2025}, though we make primitiveness a property of the TRZ \emph{and} the roots of unity chosen, whereas primitiveness is only defined as a property of the TRZ in \cite{bilu_twisted_2025}, which we find to be a little misleading. This is because different choices of roots of unity can make the TRZ primitive or not. For example, consider 0 as a TRZ of $u_n = 1 \cdot 1^n - 2^n +(1+i)\cdot 3^n - (1+i) \cdot 4^n$. Then the tuple $(0,1,1,1,1)$ is not primitive as we have the vanishing sub-sum $1 \cdot 1^0 - 2^0 = 0$, but the tuple $(0,1,-1,i,1)$ is primitive.} Since for any vanishing tuple we may scale the roots of unity by some root of unity $\zeta$ to get another vanishing tuple, enforcing $\xi_s =1 $ for primitive tuples ensures they are normalised in a canonical way.

The following is a combination of \cite[Theorem 1.9, Corollary 4.6]{bilu_twisted_2025}, which may be regarded as an analogue of the Skolem--Mahler--Lech theorem for twisted rational zeros.
\begin{theorem} \label{thm:finitely-many-primitive-tuples}
Let $u$ be a non-degenerate LRS. Then
\begin{enumerate}
    \item $u$ admits finitely many primitive vanishing tuples, and
    \item $u$ admits finitely many TRZs.
\end{enumerate}
\end{theorem}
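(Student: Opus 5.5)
Since this statement combines \cite[Theorem 1.9, Corollary 4.6]{bilu_twisted_2025}, we would mainly cite it. A self-contained proof would go as follows.

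\emph{Deriving (2) from (1).} A trivial TRZ is a common root of the polynomials $P_1,\dots,P_s$. These are not all zero, so there are finitely many trivial TRZs. Now let $x$ be a non-trivial TRZ with vanishing tuple $(x,\xi_1,\dots,\xi_s)$. Partition the vanishing sum $\sum_i \xi_i P_i(x)\lambda_i^x$ into minimal vanishing sub-sums. Discard the indices with $P_i(x)=0$ first; since $x$ is non-trivial, at least one minimal sub-sum with non-zero terms remains. Say it is indexed by $I$, with $|I|\ge 2$. After rescaling the roots of unity so that the last one equals $1$, this gives a primitive vanishing tuple for the LRS $u^{(I)}_n=\sum_{i\in I}P_i(n)\lambda_i^n$. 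This LRS is again non-degenerate. There are finitely many subsets $I$, and by (1) each $u^{(I)}$ has finitely many primitive tuples. Hence there are finitely many non-trivial TRZs.

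\emph{Proving (1): bounding the denominator.} Write $x=a/q$ in lowest terms. We would first show that $q$ is bounded. Let $\KK$ contain the $\lambda_i$ and the coefficients of the $P_i$. Consider the Galois conjugates of the primitive relation over $\KK(\xi_1,\dots,\xi_s)$. By Kummer theory, as $q$ grows the ratios $\lambda_i^x/\lambda_s^x$ generate extensions of degree roughly proportional to $q$; they are not roots of unity because $u$ is non-degenerate. Applying an automorphism $\sigma$ that fixes $\KK(\xi)$ and moves $\lambda_i^x/\lambda_s^x$ by a root of unity yields a second vanishing relation with the same coefficients $P_i(x)\xi_i$. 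Combining the two relations and using primitivity (no vanishing sub-sums), we obtain a smaller vanishing relation, a contradiction. So $q$ is bounded.

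A parallel argument should bound the orders of the $\xi_i$. Primitivity forces each ratio $\xi_i/\xi_s$ to lie in a field of bounded degree over $\KK(\lambda^{x})$, so these ratios range over finitely many roots of unity.

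\emph{Proving (1): finitely many tuples for fixed data.} With $q\le Q$ and the $\xi_i$ confined to a finite set $\Xi$, we treat each fixed choice of $q$, of the $q$-th roots $\lambda_i^{1/q}$, and of $(\xi_i)\in\Xi^s$ separately. The primitive tuples with that data correspond to integer zeros $a$ of the exponential polynomial
$$w_a=\sum_i \xi_i P_i(a/q)(\lambda_i^{1/q})^a.$$
Its characteristic roots $\lambda_i^{1/q}$ have ratios that are not roots of unity, because $\lambda_i/\lambda_j$ is not. So $w$ is non-degenerate, and the Skolem--Mahler--Lech theorem gives finitely many $a$. Summing over the finitely many choices proves (1).

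The main obstacle is the denominator bound. It needs a careful Kummer-theoretic degree estimate, uniform in the varying coefficients $P_i(x)$, together with the argument that the conjugated relations contradict primitivity. This is where we expect Bilu \emph{et al.}'s proof to do the real work. If this approach fails, we would instead apply Laurent's theorem (Mordell--Lang for tori) to the division group of $\langle\lambda_1,\dots,\lambda_s\rangle$.
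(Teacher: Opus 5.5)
Citing Bilu et al.\ is exactly what the paper does; it gives no argument of its own for this statement. Your deduction of (2) from (1) via minimal vanishing sub-sums of the non-degenerate sequences $u^{(I)}$ is sound. So is your closing Skolem--Mahler--Lech step for fixed $q$, fixed $q$-th roots $\lambda_i^{1/q}$ and fixed $(\xi_i)$. The self-contained sketch, however, has a genuine gap at its central step, the denominator bound. Suppose $\sigma$ fixes $\KK(\xi)$ and sends $\gamma_i=\lambda_i^x$ to $\zeta_i\gamma_i$. Subtracting $\zeta_s$ times the original relation from its conjugate gives $\sum_i \xi_iP_i(x)(\zeta_i-\zeta_s)\gamma_i=0$. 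This relation has smaller support, but its coefficients are no longer $\xi_iP_i(x)$, so it is not a sub-sum of the original relation. Primitivity only forbids vanishing sub-sums with the \emph{original} coefficients, so it says nothing about this relation. Combining two conjugate relations therefore yields no contradiction.

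What is missing is to use the whole Galois group at once, via the Kummer eigenspace decomposition. Write $x=a/q$ in lowest terms, $c_i=\xi_iP_i(x)$ and $F=\KK(\xi_1,\dots,\xi_s,\mu_q)$. Since $\gamma_i^q\in\KK$ and $\mu_q\subseteq F$, every $\sigma\in\mathrm{Gal}(F(\gamma)/F)$ acts by $\sigma(\gamma_i)=\chi_i(\sigma)\gamma_i$ for a character $\chi_i$, and $\chi_i=\chi_j$ if and only if $\gamma_i/\gamma_j\in F$. Eigenvectors with distinct characters are linearly independent over $F$. Hence $\sum_ic_i\gamma_i=0$ splits into the separately vanishing sums $\sum_{\chi_i=\chi}c_i\gamma_i$, and these \emph{are} sub-sums with the original coefficients. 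Primitivity thus forces $\gamma_i/\gamma_s\in F\subseteq\KK(\mu_\infty)$ for every $i$.

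As $\gcd(a,q)=1$, some $q$-th root of $\rho=\lambda_i/\lambda_s$ then lies in $\KK(\mu_\infty)$, so $X^q-\rho$ has abelian Galois group over $\KK$. By Schinzel's theorem on abelian binomials, $\rho^w$ is a $q$-th power in $\KK$ for some $w\le|\mu(\KK)|$. Since $\rho$ is not a root of unity (by non-degeneracy), a Northcott height argument bounds $q$. Note that your degree estimate must hold over $\KK(\mu_\infty)$, not just over $\KK$, because $\KK(\xi)$ is not yet bounded at that stage.

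Likewise, your bound on the orders of the $\xi_i$ is only asserted. It needs the same splitting applied to $\LL(\mu_n)/\LL(\mu_{n/p})$, where $\LL$ is the fixed number field generated over $\KK$ by all $\lambda_i^{1/q}$ with $q$ below the bound; that is, a Mann-type theorem over $\LL$. Finally, your fallback via Laurent's theorem does not apply directly when the $P_i$ are non-constant. The coefficients $P_i(x)$ vary with $x$, so they do not define a fixed subvariety of a torus.
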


\subsection{\texorpdfstring{$p$}{p}-adic zeros of LRS}
Let $u$ be a $\KK$-LRS satisfying \eqref{eqn:LRS-exp-poly} and let $\LL$ be a number field containing $\lambda_i$ and the coefficients of each $P_i$ for $i=1,\dots, s$. Let $S$ be a finite set of prime ideals of $\O_\LL$ such that the ring of $S$-integers $\O_{\LL,S} = \O_S$ contains each $\lambda_i,\lambda_i^{-1}$ and the coefficients of each $P_i$ for $i=1, \dots, s$. Let $\PP \subseteq \O_\LL$ be some prime ideal lying above $\pp \subseteq \O_\KK$ and lying above rational prime $p \in \ZZ$; we say $\PP$ is an \emph{admissible} prime for $u$ if $\PP \not\in S$. In particular, if $\PP$ is admissible, then $v_\PP(\lambda_i) = 0$ for all $i$, and $v_\pp(u_n) \geq 0$ for all $n \in \ZZ$. We say $\pp \subseteq \O_\KK$ is admissible if it lies below an admissible prime in $\O_\LL$. 

\begin{definition} \label{def:TRZ-p-adic}
Given a number field $\KK$, a $\KK$-LRS $u$ and an admissible prime ideal $\pp \subseteq \O_\KK$ lying above rational prime $p$, we call $x \in \ZZ_p$ a $\pp$-adic zero of $u$ if there is a sequence of integers $n_j \in \ZZ$ such that $v_\pp(n_j-x) \to \infty$ and $v_{\pp}(u_{n_j}) \to \infty$ as $j \to \infty$.\footnote{This differs from the definition of a $\pp$-adic zero in \cite{bacik_p-adic_2026} by a linear transformation of each $\pp$-adic zero.} 
\end{definition}
When a TRZ $x \in \QQ$ is also a $\pp$-adic zero, we say that $x$ is \emph{detected} by $\pp$.

We interpret these definitions in terms of $\pp$-adic interpolations of LRS. Let $\PP \subseteq \O_\LL$ be an admissible prime for $u$ and $N \in \NN$ be some integer for which $v_\PP(\lambda_i^N - 1) > \frac{e_{\PP/p}}{p-1}$ for all $1 \leq i \leq s$. Then $\exp \left(x\log\lambda_i^N \right)$ converges for all $x \in \O_\pp$ and so we may define for each $0 \leq \ell \leq N-1$ the $\ell$-th $\pp$-adic interpolant
\begin{align*}
    F_\ell(x) = \sum_{i=1}^s P_i(Nx+\ell) \lambda_i^\ell \exp \left(x \log \lambda_i^N \right) \, .
\end{align*}
Since $\exp$ and $\log$ are analytic on their domains, $F_\ell : \O_\PP \to \O_\PP$ is analytic. 

\begin{lemma} \label{lem:p-adic-zero-def-equal}
Let $u$ be a $\KK$-LRS and let $\pp \subseteq \O_\KK$ be an admissible prime ideal lying above rational prime $p \in \ZZ$. We have that $x \in \ZZ_p$ is a $\pp$-adic zero of $u$ according to \cref{def:TRZ-p-adic} if and only if there is $y \in \ZZ_p$ and $0 \leq \ell \leq N-1$ such that $F_\ell(y) = 0$, and $Ny+\ell =x $.
\end{lemma}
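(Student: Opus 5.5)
The plan is to compare the two sides through the identity $u_{Nm+\ell} = F_\ell(m)$ for all integers $m$ and all $0 \leq \ell \leq N-1$. For integer $m$ we have $\exp(m \log \lambda_i^N) = \lambda_i^{Nm}$, because $\exp$ and $\log$ are mutually inverse in the convergence region. Hence $\lambda_i^\ell \exp(m\log\lambda_i^N) = \lambda_i^{Nm+\ell}$, which gives the identity.

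I will also use two facts.
\begin{itemize}
\item $F_\ell$ is given by a power series with coefficients in $\O_\PP$ that converges on $\O_\PP$. It is therefore continuous, indeed Lipschitz in the sense that $v_\PP(F_\ell(a)-F_\ell(b)) \geq v_\PP(a-b)$ for $a,b\in\ZZ_p$. This needs a little care with the constants coming from $\exp$; plain continuity suffices.
\item Since $\pp = \PP \cap \O_\KK$ and $u_n \in \KK$, the condition $v_\pp(u_n) \to \infty$ is equivalent to $v_\PP(u_n)\to\infty$. For $n,x\in\ZZ_p$, the conditions $v_\pp(n-x)\to\infty$ and $v_p(n-x)\to\infty$ are likewise equivalent.
\end{itemize}

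\textbf{($\Leftarrow$).} Suppose $F_\ell(y)=0$ with $y \in \ZZ_p$ and $x = Ny+\ell$. Choose integers $m_j \to y$ $p$-adically, for instance truncations of the $p$-adic expansion of $y$, and set $n_j = Nm_j + \ell$. Then $n_j - x = N(m_j - y) \to 0$. By continuity of $F_\ell$ we get $u_{n_j} = F_\ell(m_j) \to F_\ell(y) = 0$ in $|\cdot|_\PP$, hence also in $|\cdot|_\pp$. So $x$ is a $\pp$-adic zero.

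\textbf{($\Rightarrow$).} Let $n_j \to x$ with $u_{n_j}\to 0$. Since there are only finitely many residues mod $N$, pass to a subsequence with $n_j \equiv \ell \pmod N$ for a fixed $\ell\in\{0,\dots,N-1\}$, and write $n_j = Nm_j + \ell$.

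The main obstacle is recovering $y$, because $N$ may be divisible by $p$ and then $m_j = (n_j-\ell)/N$ need not converge just because $n_j$ does. I would handle this by compactness of $\ZZ_p$: pass to a further subsequence along which $m_j \to y\in\ZZ_p$. Then $Nm_j+\ell \to Ny+\ell$, and also $n_j\to x$, so $x = Ny+\ell$. Continuity then gives $F_\ell(y) = \lim_j F_\ell(m_j) = \lim_j u_{n_j} = 0$.

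Note that the congruence condition on the residue of $x$ mod $N$ is automatically produced by this construction, since $x$ is a $p$-adic limit of integers congruent to $\ell$ modulo $N$. The only genuinely technical points are justifying the continuity of $F_\ell$ on $\ZZ_p$ and checking the interpolation identity; both follow from the convergence facts for $\exp$ and $\log$ recalled above.
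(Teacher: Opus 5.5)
Your proof is correct and follows the paper's argument: the same pigeonhole on residues modulo $N$, the interpolation identity $u_{Nm+\ell}=F_\ell(m)$ with continuity of $F_\ell$, and approximation of $y$ by integers for the converse. The compactness step is harmless but unnecessary, and the obstacle you cite does not actually arise: since $m_j-\frac{x-\ell}{N}=\frac{n_j-x}{N}$ and dividing by the fixed $N$ only lowers the valuation by $v_p(N)$, the $m_j$ converge directly to $y=\frac{x-\ell}{N}$, which lies in $\ZZ_p$ as a limit of integers --- this is exactly how the paper argues.
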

\begin{proof}
If $x \in \ZZ_p$ is a $\pp$-adic zero of $u$, then there is a sequence of integers $n_j \in \NN$ such that $v_\pp(n_j-x) \to \infty$ and $v_\pp(u_{n_j}) \to \infty$ as $j \to \infty$. By the pigeonhole principle there must be $0 \leq \ell \leq N-1$ such that there are infinitely many integers $n_j$ of the form $Nm_j + \ell$. Then $v_\pp(u_{Nm_j + \ell}) = v_\pp(F_\ell(m_j)) \to \infty$ as $j \to \infty$, and $m_j \to y \coloneq \frac{x-\ell}{N}$ in $p$-adic absolute value as $j \to \infty$, so by continuity of $F_\ell$, we have $F_\ell(y) = 0$ and $Ny + \ell =x$. 

For the other direction, if $F_\ell(y) = 0$ and $Ny+\ell = x$, then take any sequence of integers $m_j$ such that $v_\pp(m_j - y) \to \infty$, then we have $v_\pp(Nm_j+\ell - x) \to \infty$ as $j\to \infty$, and $v_\pp(F_\ell(m_j)) = v_\pp(u_{Nm_j+\ell}) \to \infty$ as $j \to \infty$ by continuity and the fact that $F_\ell(y) = 0$. 
\end{proof}

Note that since the lemma holds for any value of $N$ for which $v_\PP(\lambda_i^N-1) > \frac{e_{\PP/p}}{p-1}$, we are free to choose the most convenient such value of $N$ in applications. The following is an easy consequence of \cref{lem:p-adic-zero-def-equal}, though it is also proved in \cite[Theorem 3.5]{bilu_twisted_2025}
\begin{lemma} \label{lem:rat-zero-is-TRZ}
If $u$ is a $\KK$-LRS, $\pp \subseteq \O_\KK$ is an admissible prime ideal, and $x \in \QQ$ is a $\pp$-adic zero, then $x$ is a TRZ.
\end{lemma}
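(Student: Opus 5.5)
By \cref{lem:p-adic-zero-def-equal}, the hypothesis gives an integer $N$ with $v_\PP(\lambda_i^N-1)>\frac{e_{\PP/p}}{p-1}$ for all $i$, together with $0\le \ell\le N-1$ and $y\in\ZZ_p$ such that $Ny+\ell = x$ and $F_\ell(y)=0$. Since $x\in\QQ$, the number $y=(x-\ell)/N$ also lies in $\QQ\cap\ZZ_p$. The plan is to convert the $\PP$-adic identity $F_\ell(y)=0$ into an algebraic identity of the shape \eqref{eqn:TRZ-vanish}.

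Write $y=a/c$ with $a,c\in\ZZ$ and $p\nmid c$. Set $\beta_i := \exp(y\log\lambda_i^N)\in \LL_\PP$. The functional equation of $\exp$ and $\log$ gives
\[
\beta_i^{\,c} = \exp(a\log\lambda_i^N) = \lambda_i^{Na}.
\]
So $\beta_i$ is a $c$-th root of the algebraic number $\lambda_i^{Na}$, hence algebraic. Now fix any choice of $\lambda_i^x$ in $\Alg$, embedded in an algebraic closure of $\LL_\PP$ via a fixed embedding. From $x=Ny+\ell$ we get $(\lambda_i^{x})^{c} = \lambda_i^{Na+\ell c}$. We also have $(\lambda_i^\ell\beta_i)^c = \lambda_i^{\ell c}\lambda_i^{Na}$. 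The two numbers $\lambda_i^x$ and $\lambda_i^\ell\beta_i$ therefore have the same $c$-th power, so $\lambda_i^\ell\beta_i = \xi_i\lambda_i^x$ for some $c$-th root of unity $\xi_i$.

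Substituting into $F_\ell(y)=0$ gives
\[
\sum_{i=1}^s \xi_i P_i(x)\lambda_i^x = 0 .
\]
This is an identity among algebraic numbers that holds inside the $\PP$-adic field. Since the embedding is injective, it holds in $\Alg$ as well. Hence $x$ is a TRZ, with vanishing tuple $(x,\xi_1,\dots,\xi_s)$.

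The only delicate step is the compatibility of the choices. One has to check that the algebraic closure of $\LL_\PP$ contains a consistent copy of the chosen values $\lambda_i^x$, which holds because any algebraic number embeds once the embedding of $\Alg$ is fixed. One also has to check the identity $\exp(a\log z)=z^{a}$ for integer $a$, which follows from $\exp(\log z)=z$ and multiplicativity, given the convergence guaranteed by the choice of $N$. Everything else is bookkeeping.
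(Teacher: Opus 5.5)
Your proposal is correct and follows the route the paper intends when it calls this lemma an easy consequence of \cref{lem:p-adic-zero-def-equal}: you reduce to $F_\ell(y)=0$ with $y=(x-\ell)/N\in\QQ\cap\ZZ_p$, then write each $\lambda_i^\ell\exp(y\log\lambda_i^N)$ as $\xi_i\lambda_i^x$ for a root of unity $\xi_i$. That last step is the first part of \cref{lem:root-of-unity-unique}, which you re-derive inline, and the same computation appears in the first half of the proof of \cref{thm:detecting-TRZ}.
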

Finally, it is useful to note that the definition of a $\pp$-adic zero is stable upon taking extensions of number fields.

\begin{lemma} \label{lem:pp-adic-to-PP-adic}
Let $u$ be a $\KK$-LRS, let $\pp \subseteq \O_\KK$ be an admissible prime ideal and let $x \in \ZZ_p$. Let $\LL \supseteq \KK$ be some finite extension of $\KK$. The following are equivalent
\begin{enumerate}
    \item $x$ is a $\pp$-adic zero of $u$;
    \item $x$ is a $\PP$-adic zero of $u$ for some prime ideal $\PP \subseteq \O_\LL$ lying above $\pp$;
    \item $x$ is a $\PP$-adic zero of $u$ for all prime ideals $\PP \subseteq \O_\LL$ lying above $\pp$.
\end{enumerate}
\end{lemma}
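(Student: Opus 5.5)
The plan is to prove the cycle of implications $(3)\Rightarrow(2)\Rightarrow(1)\Rightarrow(3)$ by working directly with \cref{def:TRZ-p-adic}, comparing $v_\pp$ and $v_\PP$ on elements of $\KK$. The implication $(3)\Rightarrow(2)$ is immediate, since at least one prime $\PP\subseteq\O_\LL$ lies above $\pp$. The only input needed is the standard fact that for $\PP$ lying above $\pp$ with ramification index $e=e_{\PP/\pp}$, every $a\in\KK$ satisfies
\begin{align*}
v_\PP(a) = e\, v_\pp(a).
\end{align*}
This follows from the factorisation $\pp\O_\LL=\prod \PP_i^{e_i}$, by looking at the exponent of $\PP$ in $a\O_\LL$. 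The same identity extends by continuity to the completions, so it applies to $a = n_j - x$ with $x\in\ZZ_p\subseteq\KK_\pp\subseteq\LL_\PP$. I would also note that admissibility transfers: if $\pp$ is admissible, then the $\PP$ above it that are not in $S$ are admissible. Here I take ``$\PP$-adic zero'' in the evident sense, with $v_\PP$ in place of $v_\pp$. The values $u_{n}$ lie in $\KK$, so all valuations in question are of elements of $\KK$ (or of $\KK_\pp$).

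For $(2)\Rightarrow(1)$, suppose $n_j\in\ZZ$ satisfy $v_\PP(n_j-x)\to\infty$ and $v_\PP(u_{n_j})\to\infty$. Dividing by the fixed positive integer $e$ gives $v_\pp(n_j-x)\to\infty$ and $v_\pp(u_{n_j})\to\infty$. The direction $(1)\Rightarrow(3)$ is symmetric: the same sequence $n_j$ witnesses the $\PP$-adic zero for every $\PP$ above $\pp$, since multiplying by $e_{\PP/\pp}$ preserves divergence to infinity.

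I expect no real obstacle. The only subtle point is that the definition concerns valuations of $x$, which is an element of $\ZZ_p$ rather than of $\KK$. I would handle this by noting that $\ZZ_p$ embeds compatibly in $\KK_\pp$ and $\LL_\PP$, with $v_\PP$ restricting to $e\,v_\pp$ there. Alternatively, one can argue via \cref{lem:p-adic-zero-def-equal} that the interpolants $F_\ell$ are the same analytic functions whether they are viewed over $\KK_\pp$ or over $\LL_\PP$. In that formulation, having a zero at $y$ does not depend on the choice of field.
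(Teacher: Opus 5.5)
Your proposal is correct and follows the paper's proof: both rest on the identity $v_\PP(a) = e_{\PP/\pp}\, v_\pp(a)$ for $a \in \KK_\pp$, so the same witnessing sequence $n_j$ works for $\pp$ and for every $\PP$ above it. The paper states this as a single biconditional rather than your cycle of implications, and it does not need your remarks on admissibility or on the interpolants.
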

\begin{proof}
According to \cref{def:TRZ-p-adic}, $x \in \ZZ_p$ is a $\pp$-adic zero if and only if there is a sequence of integers $n_j \in \ZZ$ such that $v_\pp(x- n_j) \to \infty$ and $v_\pp(u_{n_j}) \to \infty$ as $j \to \infty$. For every $\PP \subseteq \O_\LL$ lying above $\pp$, and every $a \in \KK_\pp$, we have $v_\PP(a) = e_{\PP/\pp} v_\pp(a)$, so $v_\pp(x-n_j) \to \infty$ and $v_\pp(u_{n_j}) \to \infty$ if and only if $v_\PP(x-n_j) \to \infty$ and $v_\PP(u_{n_j}) \to \infty$. This is exactly the definition of $x$ being a $\PP$-adic zero. 
\end{proof}

\section{Characterising detection of TRZs by primes} \label{sec:main-sec}
\subsection{Main theorems}
In this section we prove two theorems on when TRZs are $\pp$-adic zeros. \cref{thm:avoiding_TRZ} proves that for every TRZ $y$ of a non-weakly-degenerate sequence that is not a genuine integer zero, there are infinitely many primes $\pp \subseteq \O_\KK$ for which $y$ is not a $\pp$-adic zero, while \cref{thm:detecting-TRZ} proves there are infinitely many primes for which $y$ is a $\pp$-adic zero, if $y$ is compatible. Note that an integer zero of an LRS is a $\pp$-adic zero for every admissible prime $\pp$, trivially from \cref{def:TRZ-p-adic}.

We begin with a useful lemma.

\begin{lemma} \label{lem:root-of-unity-unique}
Let $a/b \in \QQ$ with $\gcd(a,b) = 1$. Let $\lambda \in \KK$ be an algebraic number and $\theta \in \KK$ be such that $\theta^b = \lambda$. Suppose $\pp \subseteq \O_\KK$ is a prime ideal lying above rational prime $p \in \ZZ$ such that $v_\pp(\lambda-1) > \frac{e_{\pp/p}}{p-1}$ and $v_\pp(b) = 0$. Then $\exp(\frac{a}{b} \log \lambda) = \xi \theta^a$ where $\xi$ is some $b$-th root of unity. Furthermore, if $p$ is unramified in $\KK$ and $p>2$, then $\xi$ is the unique root of unity in $\KK_\pp$ with $\xi \theta^a \equiv 1 \bmod \pp$.
\end{lemma}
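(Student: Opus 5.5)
Write $E = \exp(\tfrac{a}{b}\log\lambda)$. Since $v_\pp(\lambda - 1) > \frac{e_{\pp/p}}{p-1}$, we have $v_\pp(\log\lambda) > \frac{e_{\pp/p}}{p-1}$. Also $v_\pp(a/b)\ge 0$ because $v_\pp(b)=0$. Hence $E$ is well defined in $\KK_\pp$, and it satisfies $E\equiv 1 \bmod \pp$ by the observation in the preliminaries.

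For the first claim, use the functional equation $\exp(x+y)=\exp(x)\exp(y)$, valid whenever the series converge. Applying it $b$ times gives
\begin{align*}
E^b = \exp\bigl(a\log\lambda\bigr) = \exp(\log\lambda)^a = \lambda^a .
\end{align*}
The middle equality is again repeated use of the functional equation, this time with integer $a$; for negative $a$ use $\exp(-x)=\exp(x)^{-1}$. The last equality is $\exp(\log\lambda)=\lambda$, which holds in the convergence range. On the other side, $(\theta^a)^b = \lambda^a$. Therefore $\xi := E\theta^{-a}$ satisfies $\xi^b = 1$, so $\xi$ is a $b$-th root of unity with $E=\xi\theta^a$. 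Note that $\theta^{-a}$ makes sense because $\lambda\neq 0$ forces $\theta \neq 0$. Since $\xi = E\theta^{-a}\in\KK_\pp$, it is a root of unity in $\KK_\pp$.

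For uniqueness, assume $p>2$ and $p$ unramified, so $e_{\pp/p}=1$. We already have one root of unity $\xi$ with $\xi\theta^a\equiv 1\bmod\pp$. Suppose $\xi'\in\KK_\pp$ is another root of unity with the same property. Reducing modulo $\pp$ shows $\zeta:=\xi'/\xi$ is a root of unity in $\KK_\pp$ with $\zeta\equiv 1 \bmod \pp$; here we use that $\theta$ is a $\pp$-unit, which follows from $v_\pp(\lambda)=0$. The main point is to show $\zeta=1$, which amounts to showing that reduction is injective on the roots of unity of $\KK_\pp$ when $e=1$ and $p>2$. The prime-to-$p$ roots of unity inject into the residue field, since $X^m-1$ is separable mod $p$ for $p\nmid m$.

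It therefore remains to rule out a nontrivial $p$-power root of unity. Suppose $\zeta\neq 1$ is such a root; replacing it by a suitable power, we may take $\zeta$ of order exactly $p$. Then $\QQ_p(\zeta)\subseteq\KK_\pp$ is totally ramified of degree $p-1$ over $\QQ_p$, so $e_{\pp/p}\ge p-1\ge 2$. This contradicts $e_{\pp/p}=1$. (For $p=2$ the argument breaks down, as $-1$ shows.) Alternatively, one can argue with the logarithm: $v_\pp(\zeta-1)>0$, so $\log\zeta$ converges and is $0$, since $p^k\log\zeta=\log\zeta^{p^k}=0$. When $v_\pp(\zeta-1)>\frac{1}{p-1}$, this already forces $v_\pp(\zeta-1)=v_\pp(\log\zeta)=\infty$. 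When $e=1$ and $p>2$, any $v_\pp(\zeta-1)>0$ means $v_\pp(\zeta-1)\ge 1>\frac1{p-1}$, so $\zeta=1$. I expect this second argument is the cleanest way to write up the main step, and it uses only facts already stated in the preliminaries.
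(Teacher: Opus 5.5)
Your proof is correct. The first part ($E^b=\lambda^a$, then $\xi=E\theta^{-a}$) is exactly the paper's argument. Your first uniqueness argument also follows the paper's route: separability of $X^m-1$ for the prime-to-$p$ part, and ramification of $\QQ_p(\zeta_p)$ for the $p$-power part. You add one small improvement, making the role of $p>2$ explicit via $e_{\pp/p}\ge p-1\ge 2$. The paper only asserts that $\QQ_p(\zeta)/\QQ_p$ is ramified, using $p>2$ implicitly to exclude $\zeta=-1$.

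Your preferred logarithm argument is genuinely different, and it does more than the role you assign it. Take \emph{any} root of unity $\zeta$ of order $n$ with $\zeta\equiv 1 \bmod \pp$. Integrality of $v_\pp$ gives $v_\pp(\zeta-1)\ge 1>\frac{e_{\pp/p}}{p-1}$, so $v_\pp(\zeta-1)=v_\pp(\log\zeta)$. Since $n\log\zeta=\log\zeta^n=0$, this forces $\zeta=1$ directly. So the separability step and the splitting into prime-to-$p$ and $p$-power parts can be dropped.

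This route gives a self-contained proof using only the $\exp$/$\log$ facts from the preliminaries, with no appeal to cyclotomic ramification. It also shows uniqueness already holds under the weaker hypothesis $e_{\pp/p}<p-1$; your ramification argument, via $e_{\pp/p}\ge p-1$, gives the same. The paper's route instead isolates the structural fact that reduction modulo $\pp$ is always injective on prime-to-$p$ roots of unity. The hypotheses are then needed only to rule out $p$-power torsion.
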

\begin{proof}
We have $\exp(\frac{a}{b} \log \lambda)^b = \exp(a \log \lambda) = \lambda^a$ so $\exp(\frac{a}{b} \log \lambda)$ is some $b$-th root of $\lambda^a$, meaning $\exp(\frac{a}{b} \log \lambda) = \xi \theta^a$ for some $b$-th root of unity $\xi$.

Furthermore, if $p$ is unramified in $\KK$, then $\KK_\pp/\QQ_p$ is an unramified extension. Suppose there are two roots of unity $\xi, \xi' \in \KK_\pp$ such that $\xi \theta^a \equiv \xi' \theta^a \equiv 1 \bmod \pp$. Then $\xi/\xi' \equiv 1 \bmod \pp$. Suppose that $\zeta \coloneq \xi/\xi' \neq 1$, then it is a primitive $p^\ell m$-th root of unity, for some integers $\ell \geq 0$ and $m\geq 1$ with $\gcd(m,p) = 1$. Note that $\zeta^{p^\ell}$ satisfies the polynomial $X^m-1$, which is separable over $\faktor{\ZZ}{p\ZZ}$, so 1 is the unique $m$-th root of unity which reduces to 1 in $\faktor{\O_\pp}{\pp}$. Thus, since $\zeta^{p^\ell} \equiv 1 \bmod \pp$, we have $\zeta^{p^\ell} = 1$, so $m=1$. But then $\QQ_p(\zeta)/\QQ_p$ is ramified \cite[Proposition 7.13]{neukirch_algebraic_1999}, since $\zeta$ is a $p$-power root of unity, so $\KK_\pp/\QQ_p$ is ramified, which contradicts our assumption. We conclude that $\xi$ is the unique root of unity in $\KK_\pp$ with $\xi \theta^a \equiv 1 \bmod \pp$.
\end{proof}

We now prove our theorem on primes not detecting TRZs. The general idea is that through \cref{lem:root-of-unity-unique}, a TRZ of $u$ is a $\pp$-adic zero if and only if a finite number of congruences of the form $\xi \theta^a \equiv 1 \bmod \pp$ are satisfied. Reducing the problem to congruence conditions in this way allows the application of \cref{thm:perucca} and \cref{thm:sgobba}. In fact, for our later application in the proof of \cref{thm:sim-local-global}, we prove a general form, with several LRS and several TRZs.

\begin{theorem} \label{thm:avoiding_TRZ}
Let $U$ be a finite set of non-degenerate $\KK$-LRS, and for each $u \in U$, let $Y_u$ be the set of non-trivial TRZs of $u$ that are not integer zeros of $u$. Let $Y = \bigcup_{u \in U} Y_u$. Let $Y_{u,1}= Y_u \setminus \ZZ$, and $Y_{u,2} = Y_u \cap \ZZ = Y\setminus Y_{u,1}$. Define also $Y_j = \bigcup_{u \in U} Y_{u,j}$ for $j=1,2$. Then:
\begin{enumerate}
    \item The set of prime ideals $\pp \subseteq \O_\KK$ such that, for each $u \in U$, no $y \in Y_{u,1}$ is a $\pp$-adic zero of $u$, has positive Dirichlet density.
    \item If moreover, the multiplicative group generated by the characteristic roots of all $u \in U$ is torsion-free, then the set of prime ideals $\pp \subseteq \O_\KK$ such that, for each $u \in U$, no $y \in Y_{u,2}$ is a $\pp$-adic zero of $u$, has positive Dirichlet density.
\end{enumerate}
\end{theorem}

\begin{proof}[Proof of \cref{thm:avoiding_TRZ}]
Let $u \in U$ take the form\footnote{Note that since $U$ is finite, we can write the LRS in this way where $\lambda_i$ do not depend on $u$, by allowing the $P_{i,u}$ to be identically zero.}
\begin{align} \label{eqn:u-def-2}
    u_n = \sum_{i=1}^s P_{u,i}(n) \lambda_i^n \, .
\end{align}
Any prime ideals considered in this proof will be assumed to be admissible for every LRS $u \in U$. Note that only finitely many primes are inadmissible, and discarding finitely many primes does not affect the truth of the result, so we may freely discard the inadmissible primes.

Note that by \cref{thm:finitely-many-primitive-tuples}, $Y$ must be finite. For each denominator $b$ of each TRZ $y \in Y_1$, we fix in advance a definition of $b$-th roots of each $\lambda_i$; let $\theta_{i,b} \in \Alg$ be such that $\theta_{i,b}^b = \lambda_i$. Let $\Theta$ be the set of all $\theta_{i,b}$, for each $1 \leq i \leq s$ and each denominator $b$.

Consider all LRS formed by sub-sums of $u \in U$, that is, of the form
\begin{align*}
    u_n^{(I)} = \sum_{i \in I} P_{u,i}(n) \lambda_i^n \, ,
\end{align*}
where $\varnothing \neq I \subseteq \{1, \dots, s\}$. By \cref{thm:finitely-many-primitive-tuples}, each $u_n^{(I)}$ has finitely many primitive vanishing tuples associated to each TRZ $y \in Y$. Let $\Xi$ be the set of all roots of unity appearing in a primitive vanishing tuple of some $u_n^{(I)}$, ranging over $u \in U$ and $\varnothing \neq I \subseteq \{1, \dots, s\}$. Note that $\Xi$ is finite, since there are finitely many subsets $I \subseteq \{1,\dots, s\}$. 

Define $\LL = \KK(\Theta , \Xi)$. By \cref{lem:pp-adic-to-PP-adic}, it suffices to prove $x$ is not a $\PP$-adic zero of any $u \in U$ for a positive density of prime ideals $\PP \subseteq \O_\LL$. Finally, let $N > 0$ be any integer for which $v_\PP(\lambda_i^N -1 ) > \frac{e_{\PP/p}}{p-1}$ for all $1 \leq i \leq s$. We denote the $\ell$-th $\PP$-adic interpolant of $u$ by $F_{u,\ell}$ and the $\ell$-th $\PP$-adic interpolant of $u^{(I)}$ by $F^{(I)}_{u,\ell}$. 

Given a vanishing tuple $(y, \xi_1, \dots, \xi_s)$ for $u$, if we have disjoint subsets $I_0,I_1, \dots, I_t \subseteq \{1, \dots, s\}$ such that $I_0 \cup I_1 \cup \dots \cup I_t = \{1, \dots, s\}$, we have $P_{u,i}(y) = 0$ for all $i \in I_0$, and each tuple $(y, (\xi_i)_{i \in I_j})$ is a primitive vanishing tuple of $u_n^{(I_j)}$ for $j=1 ,\dots, t$, then we say $(y, \xi_1, \dots, \xi_s)$ has a primitive decomposition given by $I_0, I_1, \dots, I_t$. We denote the largest element of a subset $I_j$ by $[I_j]$. 

We need the following lemma.
\begin{lemma} \label{lem:divisibility-condition}
Write $y = a/b$, where $a \in \ZZ$, $b \in \NN$, and $\gcd(a,b) = 1$. If $y \in Y_u$ is a $\PP$-adic zero of some $u \in U$, then there is a vanishing tuple $(y, \xi_1, \dots, \xi_s)$ with a primitive decomposition given by subsets $I_0, I_1, \dots I_t$, and there exists $0 \leq \ell \leq N-1$ such that for each $j=1, \dots, t$ and each $i \in I_j$, we have
\begin{align*}
    \ord_\PP \left( \frac{\theta_{i,b}}{\theta_{[I_j],b}} \right) \mid \ord(\xi_{i})(a-b\ell) \, .
\end{align*}
Furthermore, if $b = 1$ and
\begin{align*}
    \ord_\PP \left( \frac{\theta_{i}}{\theta_{[I_j]}} \right) \mid (a-\ell) \, .
\end{align*}
for all $j=1, \dots, t$ and $i \in I_j$, then $u_y = u_a = 0$.
\end{lemma}
\begin{proof}
If $y$ is a $\PP$-adic zero of $u$ then by \cref{lem:p-adic-zero-def-equal} we have $0 \leq \ell \leq N-1$ such that $\frac{a-b\ell}{bN} \in \ZZ_p$, where $p \in \ZZ$ is the rational prime lying below $\PP$, and
\begin{align*}
    F_{u,\ell} \left(\frac{a-b\ell}{bN} \right) &= \sum_{i=1}^s P_{u,i}(y) \lambda_i^{\ell} \exp \left( \frac{a-b\ell}{bN} \log \lambda_i^N \right) = 0 \, .
\end{align*}
We split the sum into primitive sub-sums; there are disjoint $I_0,I_1, \dots, I_t \subseteq \{1, \dots, s\}$ with $I_0 \cup I_1 \cup \dots \cup I_t = \{1, \dots, s\}$ such that $P_i(y) = 0$ for all $i \in I_0$, and for each $j=1 , \dots, t$ and any proper subset $J \subsetneq I_j$ we have
\begin{align*}
    \sum_{i \in I_j} P_{u,i}(x) \lambda_i^{\ell} \exp \left( \frac{a-b\ell}{bN} \log \lambda_i^N \right) =  0 \, , \qquad \sum_{i \in J} P_{u,i}(x) \lambda_i^{\ell} \exp \left( \frac{a-b\ell}{bN} \log \lambda_i^N \right) \neq 0 \, .
\end{align*}

Since $\frac{a-b\ell}{bN} \in \ZZ_p$, when written in lowest terms, the denominator has valuation zero with respect to $p$. Therefore, we may apply \cref{lem:root-of-unity-unique} to get that
\begin{align*}
    \exp \left(\frac{a-b\ell}{bN} \log \left(  \lambda_{i}^N \right) \right) = \zeta_{i} \theta_{i,b}^{a-b\ell} \, ,
\end{align*}
where $\zeta_{i}$ is a root of unity with $\zeta_{i}^{bN} = 1$. Therefore, this gives rise to a primitive vanishing tuple $\left( y, \left( \zeta_i/\zeta_{[I_j]} \right)_{i \in I_j} \right)$ of $u^{(I_j)}$. Let $\xi_i = \zeta_i/\zeta_{[I_j]}$. 

Now note that since 
\begin{align*}
\exp \left(\frac{a-b\ell}{bN} \log \left(  \lambda_{i}^N \right) \right) \equiv 1 \mod \PP \, ,
\end{align*}
we have $\zeta_i \theta_{i,b}^{a-b\ell} \equiv 1 \bmod \PP$, and therefore
\begin{align*}
    \xi_{i} \left( \frac{\theta_{i,b}}{\theta_{[I_j],b}} \right)^{a-b\ell} \equiv 1 \mod \PP \, ,
\end{align*}
for all $i \in I_j$, and so $\left( \theta_{i,b}/\theta_{[I_j],b} \right)^{(a-b\ell) \ord(\xi_{i})} \equiv 1 \mod \PP$, which implies that 
\begin{align} \label{eqn:divisibility-condition}
    \ord_\PP \left( \frac{\theta_{i,b}}{\theta_{[I_j],b}} \right) \mid \ord(\xi_{i})(a-b\ell) \, ,
\end{align}
and this is what was required to show, for the first part.

For the second part, if $b=1$ and $\ord_\PP \left( \lambda_{i}/\lambda_{[I_j]} \right) \mid (a-\ell)$ for all $j = 1, \dots, t$ and $i \in I_j$, then we have  
\begin{align*}
    &F^{(I_j)}_{u,\ell} \left( \frac{a- \ell}{N} \right) = \lambda_{[I_j]}^\ell\exp \left(\frac{a-\ell}{N} \log \left(  \lambda_{[I_j]}^N \right) \right) \sum_{i \in I_j} P_{u,i}(a) \left(\frac{\lambda_i}{\lambda_{[I_j]}}\right)^\ell \exp \left(\frac{a-\ell}{N} \log \left(  \left(\frac{\lambda_i}{\lambda_{[I_j]}} \right)^N \right) \right) \\
   &=   \lambda_{[I_j]}^\ell\exp \left(\frac{a-\ell}{N} \log \left(  \lambda_{[I_j]}^N \right) \right) \sum_{i \in I_j} P_{u,i}(a) \left(\frac{\lambda_i}{\lambda_{[I_j]}}\right)^\ell \exp \left(\frac{a-\ell}{\ord_\PP(\lambda_i/\lambda_{[I_j]})} \log \left(  \left(\frac{\lambda_i}{\lambda_{[I_j]}} \right)^{\ord_\PP(\lambda_i/\lambda_{[I_j]})} \right) \right) \\
   &=   \lambda_{[I_j]}^\ell\exp \left(\frac{a-\ell}{N} \log \left(  \lambda_{[I_j]}^N \right) \right) \sum_{i \in I_j} P_{u,i}(a) \left(\frac{\lambda_i}{\lambda_{[I_j]}}\right)^\ell \exp \left( \log \left(  \left(\frac{\lambda_i}{\lambda_{[I_j]}} \right)^{a-\ell} \right) \right) \\
   &=  \lambda_{[I_j]}^\ell\exp \left(\frac{a-\ell}{N} \log \left(  \lambda_{[I_j]}^N \right) \right) \sum_{i \in I_j} P_{u,i}(a) \left(\frac{\lambda_i}{\lambda_{[I_j]}}\right)^a \\
   &= \frac{\lambda_{[I_j]}^\ell \exp \left(\frac{a-\ell}{N} \log \left(  \lambda_{[I_j]}^N \right) \right)}{\lambda_{[I_j]}^a} u^{(I_j)}_a  \\
   &= 0 \, ,
\end{align*}
so $u^{(I_j)}_a = 0$ for all $j = 1 , \dots, t$, and therefore $u_a = \sum_{j=1}^t u^{(I_j)}_a = 0$.
\end{proof}
We now show how the divisibility condition in \cref{lem:divisibility-condition} may be avoided for a positive density of primes $\PP$, for TRZs in $Y_1$ and $Y_2$.

\textbf{Case 1:} For each $y \in Y_1$, write $y = a/b$ where $a \in \ZZ$, $b \in \NN$ and $\gcd(a,b) = 1$. Let $B$ be the set of all denominators of all $y \in Y_1$.

Define the integer
\begin{align*}
    m = 1+ \max\{v_q(\ord(\xi)) \mid \xi \in \Xi \text{ and } q \text{ is a prime dividing some }  b \in B \} \, .
\end{align*}
Consider the multiplicative group $\KK^\times$, and consider the finite set of elements 
\begin{align*}
    A = \left\{ \frac{\theta_{i,b}}{\theta_{j,b}} \, \middle| \, \lambda_i,\lambda_j \text{ are both characteristic roots of some $u \in U$,} \text{ and } i\neq j\, , \text{ and } b \in B \right\} \, .
\end{align*}
Since each $u \in U$ is non-degenerate, no element of $A$ is a root of unity. Therefore, by \cref{thm:perucca} applied to the elements of $A$, there is a positive Dirichlet density of primes $\PP \subseteq \O_\LL$ such that for every $i \neq j$, we have $\prod_{\tilde b \in B} \tilde{b}^m \mid \ord_\PP(\theta_{i,b}/\theta_{j,b})$ for every $b \in B$. Now, given any $u \in U$, $y = a/b \in Y_{u,1}$, and any prime $q \mid b$, \cref{lem:divisibility-condition} implies that for such primes $\PP \subseteq \O_\LL$, if $y$ is a $\PP$-adic zero of $u$, then we have $q^m \mid \ord(\xi)(a-b\ell)$ for some $\xi \in \Xi$ and some integer $\ell$. But since $\gcd(a,b) = 1$, $q \nmid (a-b \ell)$; indeed otherwise $q \mid a$ which contradicts $\gcd(a,b)=1$. Therefore, $q^m \mid \ord(\xi)$. This contradicts the definition of $m$, so $y$ is not a $\PP$-adic zero of $u$. 

\textbf{Case 2:} Let $Q$ be the (finite) set of rational primes that divide any $\ord(\xi)$ for $\xi \in \Xi$. Let $G = \langle \lambda_1, \dots, \lambda_s \rangle \subseteq \LL^\times$, then $G$ is torsion-free, so we may apply \cref{thm:sgobba} to conclude that there is a positive density of prime ideals $\PP \subseteq \O_\LL$ such that $v_{q}(\ord_\PP(G)) = 0$ for all $q \in Q$. In particular, for each $1 \leq i, j \leq s$ with $i \neq j$, we have $q \nmid \ord_\PP(\lambda_i/\lambda_j)$ for each $q \in Q$, and hence $\gcd(\ord_\PP(\lambda_i/\lambda_j) , \ord(\xi_i) ) = 1$. 

Suppose that for such a prime $\PP$, there is $u \in U$ for which some $y \in Y_{u,2}$ is a $\PP$-adic zero of $u$. Then by \cref{lem:p-adic-zero-def-equal}, we have a vanishing tuple $(y, \xi_1, \dots, \xi_s)$ with primitive decomposition $I_0,I_1, \dots, I_t$ such that for each $j=1, \dots, t$ and all $i \in I_j$ we have
\begin{align*}
    \ord_\PP \left( \frac{\lambda_{i}}{\lambda_{[I_j]}} \right) \mid \ord(\xi_{i})(a-\ell) \, .
\end{align*}
Furthermore, since $u_y \neq 0$ by assumption, there is some $i,j$ for which $\ord_\PP \left( \lambda_{i}/\lambda_{[I_j]} \right) \nmid (a-\ell) $, so therefore $\gcd \left(\ord_\PP \left( \lambda_{i}/\lambda_{[I_j]} \right) , \ord(\xi_i) \right) > 1$. But this contradicts the choice of $\PP$. So no $y \in Y_{u,2}$ is a $\PP$-adic zero of $u$ for any such prime $\PP \subseteq \O_\LL$.
\end{proof}

\begin{corollary}[Answer to \protect{\cite[Question 6.7]{bilu_twisted_2025}}] \label{cor:avoiding-cor}
Let $u$ be a non-degenerate $\KK$-LRS and $x \in \QQ$. Suppose for all prime ideals $\pp \subseteq \O_\KK$ except a set of zero density, there is a sequence of integers $n_j \in \ZZ$ (depending on $\pp$) such that
\begin{align*}
    v_\pp(n_j - x) \to \infty \, \qquad v_\pp(u_{n_j}) \to \infty \, .
\end{align*}
Then
\begin{enumerate}
    \item if $x \not\in \ZZ$, then $x$ is a trivial TRZ.
    \item if $u$ is also non-weakly-degenerate, then either $x$ is a trivial TRZ, or $x \in \ZZ$ and $u_x = 0$. 
\end{enumerate}
\end{corollary}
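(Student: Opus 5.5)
The plan is to argue by contraposition from \cref{thm:avoiding_TRZ}, applied with $U = \{u\}$. The hypothesis says that $x$ is a $\pp$-adic zero of $u$ for all admissible primes $\pp \subseteq \O_\KK$ outside a set of zero density. Discarding the finitely many inadmissible primes does not change this.

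First we show that $x$ is a TRZ. Pick any admissible prime $\pp$ in the good set, which is possible since its complement has density zero. Then $x$ is a $\pp$-adic zero with $x \in \QQ$, so \cref{lem:rat-zero-is-TRZ} shows that $x$ is a TRZ of $u$. Now suppose, for contradiction, that $x$ is a non-trivial TRZ.

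For part 1, assume $x \notin \ZZ$. Then $x$ is not an integer zero, so $x \in Y_{u,1}$. By \cref{thm:avoiding_TRZ}(1), the set of primes $\pp$ for which $x$ is not a $\pp$-adic zero has positive Dirichlet density. This contradicts the hypothesis that the exceptional set has density zero, so $x$ must be trivial.

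For part 2, assume $u$ is non-weakly-degenerate, so the group generated by its characteristic roots is torsion-free. If $x \notin \ZZ$, part 1 applies. If $x \in \ZZ$, suppose $u_x \neq 0$ and $x$ is non-trivial. Then $x \in Y_{u,2}$, and \cref{thm:avoiding_TRZ}(2) again produces a positive-density set of primes not detecting $x$, giving the same contradiction. Hence either $x$ is trivial, or $x \in \ZZ$ with $u_x = 0$.

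The one point needing care is the notion of density. \cref{thm:avoiding_TRZ} gives positive Dirichlet density, while ``zero density'' in the hypothesis might be meant in the natural sense. This is harmless because a natural density, when it exists, equals the Dirichlet density, so a natural-density-zero set has Dirichlet density zero. The complement of the good set therefore has Dirichlet density zero, and any subset of it has upper Dirichlet density zero. A positive-density set of primes cannot lie inside such a set.

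There is a second bookkeeping point. \cref{thm:avoiding_TRZ} is proved through primes $\PP$ of $\LL$, but it is stated for primes of $\KK$ via \cref{lem:pp-adic-to-PP-adic}. If needed, a positive density of primes $\PP$ of $\LL$ pushes down to a positive density of primes of $\KK$, since degree-one primes dominate and each $\pp$ has boundedly many primes above it. I expect this density translation to be the only mildly delicate step; the rest is direct.
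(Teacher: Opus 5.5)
Your proposal is correct and follows the paper's proof exactly. You first get that $x$ is a TRZ from \cref{lem:rat-zero-is-TRZ} at one good admissible prime, and then apply the contrapositive of parts 1 and 2 of \cref{thm:avoiding_TRZ} with $U=\{u\}$. Your handling of the density notions is more careful than the paper's, which loosely rephrases the hypothesis as ``all but finitely many'' primes. The pushdown from primes of $\LL$ to primes of $\KK$ belongs to the proof of \cref{thm:avoiding_TRZ} rather than to this corollary, so you need not redo it here.
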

\begin{proof}
Note that the hypothesis says exactly that $x$ is a $\pp$-adic zero for all but finitely many prime ideals $\pp \subseteq \O_\KK$, so in particular $x$ must be a TRZ of $u$ by \cref{lem:rat-zero-is-TRZ}. Item 1 simply follows from the contrapositive of item 1 of \cref{thm:avoiding_TRZ}, while item 2 follows from the contrapositive of item 2 of \cref{thm:avoiding_TRZ}, using $U = \{u\}$ and $Y = \{x\}$.
\end{proof}

\begin{remark}
\cref{thm:avoiding_TRZ} item 1 fails for degenerate LRS. A somewhat silly example is $u_n = 1 - (-1)^n$, with the TRZ $1/3$. We have $u_{2n} = 0$ and $u_{2n+1} = 2$ for every $n \in \ZZ$, and these subsequences trivially extend to constant $p$-adic analytic functions $F_0, F_1 : \ZZ_p \to \ZZ_p$ defined by $F_0(n) = u_{2n}$ and $F_1(n) = u_{2n+1} = 2$. Therefore, $F_0(1/6) = 0$, identifying $1/3$ as a $p$-adic zero for every prime $p \geq 5$.
\end{remark}

\begin{remark}
\cref{thm:avoiding_TRZ} part 2 fails for weakly-degenerate LRS, even if they are non-degenerate. See \cref{ex:bad-TRZ-example}.
\end{remark}

\begin{remark}
\cref{thm:avoiding_TRZ} shows that for LRS which are not weakly degenerate, we can pick one single prime ideal $\pp$ for which no integer TRZ (that is not a genuine integer zero of the LRS) is detected as a $\pp$-adic zero, and we can also pick one single prime ideal $\qq$ for which no non-integer TRZs are detected as $\qq$-adic zeros, but there need not exist one prime ideal that avoids detecting all TRZs that are not genuine integer zeros. See \cref{ex:two-TRZs}.
\end{remark}

We now prove our theorem on primes detecting TRZs, answering \cite[Question 1.7]{bilu_twisted_2025}.

\begin{theorem} \label{thm:detecting-TRZ}
Any non-trivial TRZ of a non-weakly-degenerate $\KK$-LRS $u$ is a $\pp$-adic zero for a positive density of primes $\pp \subseteq \O_\KK$ if and only if it is compatible.
\end{theorem}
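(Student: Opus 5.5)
We prove the two directions separately. Throughout, write $u_n = P(n,\mu_1^n,\dots,\mu_r^n)$ with $\mu_1,\dots,\mu_r$ multiplicatively independent, and $x = a/b$ in lowest terms. Fix $b$-th roots $\theta_j$ with $\theta_j^b=\mu_j$. Put all of these, together with the relevant roots of unity, into a number field $\LL \supseteq \KK$. By \cref{lem:pp-adic-to-PP-adic}, it suffices to work with primes $\PP \subseteq \O_\LL$: if a set of primes of $\LL$ has positive density, then the set of primes of $\KK$ below it also has positive density.

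\emph{Necessity.} Suppose $x$ is a $\PP$-adic zero for some admissible $\PP$ with $p>2$, $p$ unramified in $\LL$, and $p\nmid b$. We use \cref{lem:p-adic-zero-def-equal} with $N$ chosen so that $v_\PP(\mu_j^N-1)>e/(p-1)$ for all $j$. This gives $\ell$ with
\begin{align*}
F_\ell\left(\tfrac{a-b\ell}{bN}\right)=P\Bigl(x, \mu_1^\ell\exp\bigl(\tfrac{a-b\ell}{bN}\log\mu_1^N\bigr),\dots\Bigr)=0 \, .
\end{align*}
Here we use that $\lambda_i^x$ is a monomial in the $\mu_j^x$, since $u$ is not weakly degenerate. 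By \cref{lem:root-of-unity-unique}, each factor equals $\mu_j^\ell \zeta_j\theta_j^{a-b\ell}=\zeta_j\theta_j^a$ for some root of unity $\zeta_j$. Hence $P(x,\zeta_1\theta_1^a,\dots,\zeta_r\theta_r^a)=0$, which says exactly that $x$ is compatible. There is a subtlety: the roots of unity $\zeta_j$ might a priori lie only in $\LL_\PP$. They are, however, $bN$-th roots of unity, so they are algebraic, and the identity is an algebraic one. So a single good $\PP$ already forces compatibility, and in particular a positive density of such primes does.

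\emph{Sufficiency.} Suppose $P(x,\xi_1\theta_1^a,\dots,\xi_r\theta_r^a)=0$. The goal is a positive density of $\PP$ for which the $\PP$-adic values $\exp(\frac{a-b\ell}{bN}\log\mu_j^N)\mu_j^\ell$ coincide with $\xi_j\theta_j^a$ for all $j$ simultaneously. By the uniqueness part of \cref{lem:root-of-unity-unique}, this holds if and only if $\xi_j\theta_j^{a-b\ell}\equiv 1 \bmod \PP$ for each $j$, for a suitable $\ell$.

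The plan is to choose $\PP$ such that the reductions behave compatibly. Let $G=\langle\theta_1,\dots,\theta_r\rangle$. Adjusting the choice of $\theta_j$ by roots of unity, we may assume $G$ is torsion-free of rank $r$. We then need $\ell$ with $\theta_j^{a-b\ell}\equiv \xi_j^{-1} \bmod \PP$ for all $j$. A natural sufficient condition uses Chebotarev in the Kummer extension $\LL(\zeta_M, G^{1/M})$, where $M$ is a multiple of the orders of the $\xi_j$, of $b$, and of the relevant exponents. We look for primes $\PP$ whose Frobenius acts on the $M$-th roots $\theta_j^{1/M}$ in a prescribed way: $\xi_j^{-1}$ should be expressible as $\theta_j^{k}$ modulo $\PP$ with a common exponent $k\equiv a \pmod b$. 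Multiplicative independence of the $\theta_j$ gives the full Kummer Galois group. Since the conditions are one Chebotarev condition, we get a positive-density set of $\PP$, and for such $\PP$ one can solve $a-b\ell \equiv k$ modulo $\ord_\PP(G)$, which is possible when $\gcd(b,\ord_\PP(G))$ behaves suitably, ensuring $x$ is a $\PP$-adic zero. This is where \cref{thm:sgobba} would be used: it lets us impose $v_q(\ord_\PP(G))$ for the primes $q\mid bM$, so that $b$ is invertible modulo the relevant part of the order.

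\emph{Main obstacle.} The hardest step is this simultaneous choice. We need one integer exponent $\ell$ satisfying $\xi_j\theta_j^{a-b\ell}\equiv1 \bmod \PP$ for every $j$ at once, and with the same $\ell$ across all $j$, so the reductions of the $\theta_j$ must be coordinated. The first approach to try is to arrange, via Chebotarev in the Kummer extension, that $\overline G$ is cyclic and generated by a single element $\bar g$. Each $\bar\theta_j$ is then a power $\bar g^{c_j}$, and the $\xi_j^{-1}$ can be prescribed as $\bar g^{c_jk}$ simultaneously. Proving that the Frobenius conditions encoding this are consistent is where the real care goes; it needs the independence of the Kummer extensions over $\LL(\zeta_M)$.
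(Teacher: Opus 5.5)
Your necessity direction is correct and is the paper's argument. Your reduction of sufficiency is also exactly the paper's: by the uniqueness part of \cref{lem:root-of-unity-unique}, it suffices to find, for a positive density of $\PP$, one integer $\ell$ with $\xi_j\theta_j^{a-b\ell}\equiv 1 \bmod \PP$ for all $j$ simultaneously.

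\textbf{The gap.} The sufficiency proof stops at that point. The step you call the main obstacle is never carried out, and the tools you point to do not resolve it.
\begin{itemize}
\item Arranging that $\overline G$ is cyclic is vacuous, since $\overline G$ lies in the cyclic group $(\O_\LL/\PP)^\times$. The real difficulty is forcing the discrete logarithms of the $\bar\theta_j$ and of the $\bar\xi_j^{-1}$ to be related by a single exponent $k\equiv a \pmod{b}$.
\item Applying \cref{thm:sgobba} to the untwisted group $G=\langle\theta_1,\dots,\theta_r\rangle$ to make $\ord_\PP(G)$ prime to $bM$ defeats the purpose. Every power of $\bar\theta_j$ then has order prime to $M$. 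But $\bar\xi_j^{-1}$ has order exactly $\ord(\xi_j)$, because reduction is injective on roots of unity of order prime to $p$. So no $k$ works unless $\xi_j=1$.
\item Prescribing other valuations of $\ord_\PP(G)$ does not help either. It controls only the order of the whole group, not the relative position of the individual $\bar\theta_j$. For instance, with $\xi_1=-1$ and $\xi_2=1$ you need $v_2(\ord_\PP(\theta_1))>v_2(\ord_\PP(\theta_2))$, and this is invisible in $\ord_\PP(G)$.
\end{itemize}

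\textbf{The missing idea} is to twist the generators by roots of unity before applying \cref{thm:sgobba}.
\begin{itemize}
\item Let $M=\prod_j\ord(\xi_j)$. Pick $c$ with $\gcd(c,M)=1$ and $c\equiv -a \pmod{b}$; this is possible by the Chinese Remainder Theorem, using $\gcd(a,b)=1$.
\item Let $\eta_j$ be the power of $\xi_j$ with $\eta_j^c=\xi_j$, and set $G'=\langle\theta_1/\eta_1,\dots,\theta_r/\eta_r\rangle$. This group is still torsion-free, by multiplicative independence of the $\mu_j$.
\item So \cref{thm:sgobba} gives a positive density of $\PP$ with $\ord_\PP(G')$ coprime to $bM$.
\item For such $\PP$, choose $e\equiv 0$ modulo $\prod_j\ord_\PP(\theta_j/\eta_j)$ and $e\equiv c \pmod{\mathrm{lcm}(b,M)}$; the moduli are coprime. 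Then $e=-a+b\ell$ for an integer $\ell$, and for every $j$ at once
\[
\theta_j^{e}=\eta_j^{e}(\theta_j/\eta_j)^{e}\equiv\eta_j^{c}=\xi_j \bmod \PP .
\]
\item Finally, reduce $\ell$ modulo $N$.
\end{itemize}
The twist encodes exactly the coordination you were after: the $bM$-primary part of each $\bar\theta_j$ equals $\bar\eta_j$. It does so as a single coprimality condition on the order of one torsion-free group. No further Kummer-theoretic consistency check of Frobenius conditions is then needed.
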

\begin{proof}
We once again assume any prime ideal considered is admissible for $u$, as this discards only finitely many primes. Let $u$ have characteristic roots $\lambda_1, \dots, \lambda_s$, whose multiplicative group is generated by $\mu_1, \dots, \mu_r$, with
\begin{align*}
    \lambda_i = \mu_1^{m_{i,1}} \cdots \mu_r^{m_{i,r}} \, .
\end{align*}
Let $y \in \QQ$ be a TRZ of $u$, and write $y=a/b$, where $\gcd(a,b) = 1$. First we prove that if $y$ is a $\pp$-adic zero for some prime ideal $\pp$, it is compatible with $u$. Indeed, let $\PP$ be a prime of $\KK(\mu_1, \dots, \mu_r)$, and let $N >0$ be an integer such that $v_\PP(\mu_i^{N}-1) > \frac{e_{\PP/p}}{p-1}$. Let $\theta_i$ be a $b$-th root of $\mu_i$; we have $\theta_i^b = \mu_i$. If $a/b$ is a $\PP$-adic zero, then by \cref{lem:p-adic-zero-def-equal} there is integer $0 \leq \ell \leq N-1$ such that $\frac{a-b\ell}{bN} \in \ZZ_p$, and we have
\begin{align*}
    F_\ell \left(\frac{a-b\ell}{bN} \right) &= \sum_{i=1}^s P_i(a/b) \lambda_i^\ell \exp \left(\frac{a-b\ell}{bN} \log(\lambda_i^N) \right) = 0 \\
    &= \sum_{i=1}^s P_i(a/b) \lambda_i^\ell \prod_{j=1}^r \exp \left(\frac{a-b\ell}{bN} \log(\mu_j^{N}) \right)^{m_{i,j}} \\
    &= \sum_{i=1}^s P_i(a/b) \lambda_i^\ell \prod_{j=1}^r (\xi_j \theta_j^{a-b\ell})^{m_{i,j}} \\
    &= \sum_{i=1}^s P_i(a/b) \prod_{j=1}^r (\xi_j \theta_j^{a})^{m_{i,j}} \, ,
\end{align*}

where $\xi_j$ are some $bN$-th roots of unity, by \cref{lem:root-of-unity-unique}. This is exactly the definition of a compatible TRZ.

Conversely, suppose that $a/b$ is a compatible TRZ. Then there exist roots of unity $\xi_1, \dots, \xi_r$ such that
\begin{align*}
\sum_{i=1}^s P_i(a/b) (\xi_1 \theta_1^a)^{m_{i,1}} \cdots (\xi_r \theta_r^a)^{m_{i,r}} = 0 \, ,
\end{align*}
where $\theta_i^b = \mu_i$. Let $\LL = \KK(\theta_1,\dots,\theta_r, \xi_1,\dots,\xi_r)$, and consider prime ideals $\PP \subseteq \O_\LL$ lying above $\pp \subseteq \O_\KK$ and rational prime $p \in \ZZ$, such that $p>2$ and $p$ is unramified in $\LL$. Note that restricting to $\PP$ lying above unramified $p>2$ excludes finitely many primes. Let $N$ be the smallest positive integer such that $v_\PP(\mu_i^N-1) > \frac{e_{\PP}}{p-1}$ for all $i = 1 , \dots, r$. By \cref{lem:root-of-unity-unique}, for any integer $0 \leq \ell \leq N-1$, and for each $i=1,\dots,r$, there is a root of unity $\zeta_i \in \LL_\PP$ such that
\begin{align} \label{eqn:exp-log-eq}
    \exp \left( \frac{a-b\ell}{bN} \log(\mu_i^N)\right) = \zeta_i \theta_i^{a-b\ell} \, ,
\end{align}
and furthermore $\zeta_i$ is the unique root of unity in $\LL_\PP$ with $\zeta_i \theta_i^{a-b\ell} \equiv 1 \bmod \PP$. Now, $a/b$ is certainly identified as a $\PP$-adic zero for such prime ideal $\PP$ if $\zeta_i = \xi_i$, which by uniqueness of $\zeta_i$ is the case if and only if $\xi_i \theta_i^{a-b\ell} \equiv 1 \bmod \PP$. It suffices therefore, to show there is a positive density of such prime ideals $\PP$ for which there is integer $0 \leq \ell \leq N-1$ with $\xi_i \theta_i^{a-b\ell} \equiv 1 \bmod \PP$ for all $i = 1, \dots, r$. 

Now, let $M = \prod_i \ord(\xi_i)$. If $b=1$, define $c=1$, otherwise if $b>1$, pick $c$ to be an integer such that $\gcd(c,M) =1$ and $c \equiv -a \bmod b$. Indeed, use the Chinese Remainder Theorem to find $c$ satisfying $c \equiv -a \bmod b$ and $c \equiv 1 \bmod q$ for all primes $q \mid M$ such that $q \nmid b$. Then for any prime $q' \mid M$ satisfying $q' \mid b$, the fact that $\gcd(a,b) = 1$ ensures that $q' \nmid c$, so indeed $\gcd(c,M) = 1$ as required.

Now pick $\eta_i$ to be a root of unity such that $\eta_i^c = \xi_i$ for all $i$. Note that $\eta_i \in \LL$, as it is just some power of $\xi_i$, since $\gcd(c,M) =1$. Let $G = \langle 
\theta_1/\eta_1, \dots, \theta_r/\eta_r\rangle \leq \LL^\times$. We prove $G$ is torsion-free; suppose there are integers $n_1, \dots, n_r$ such that $(\theta_1/\eta_1)^{n_1}\cdots (\theta_r/\eta_r)^{n_r} = \zeta$ for some root of unity $\zeta$. This implies that
\begin{align*}
((\theta_1/\eta_1)^{n_1}\cdots (\theta_r/\eta_r)^{n_r})^{bM\ord(\zeta)} = \mu_1^{n_1 M\ord(\zeta)} \cdots \mu_r^{n_r M \ord(\zeta)} = 1 \, ,
\end{align*}
and by multiplicative independence of the $\mu_1, \dots, \mu_r$ this means $n_1 M \ord(\zeta) = \dots = n_r M \ord(\zeta) = 0$ so $n_1 = \dots = n_r = 0$ and $\zeta =1$, so $G$ is indeed torsion-free (and clearly of positive rank, as e.g. $\theta_1/\eta_1$ is not a root of unity).

Therefore, we may apply \cref{thm:sgobba} to conclude that there is a positive density of prime ideals $\PP \subseteq \O_\LL$ such that $\ord_\PP(G)$ is coprime to $bM$, and so $\ord_\PP(\theta_i/\eta_i)$ is coprime to $bM$ for all $i$. Then by Chinese Remainder Theorem, we may find a solution to the congruences
\begin{align*}
    e \equiv 0 \mod  \prod_i \ord_\PP(\theta_i/\eta_i)\, , \quad e \equiv c \mod M\, , \quad e \equiv c \mod b
\end{align*}
and for such an integer $e$, we have $e = -a + b \ell$ for some integer $\ell$, and
\begin{align*}
    \theta_i^{-a+b\ell} &\equiv \theta_i^{e} \mod \PP \\
    &\equiv \eta_i^e \left( \frac{\theta_i}{\eta_i} \right)^e \mod \PP \\
    &\equiv \xi_i \mod \PP \, ,
\end{align*}
and hence $\xi_i \theta_i^{a-b\ell} \equiv 1 \bmod \PP$. Note that if $\ell' \equiv \ell \bmod N$, then $\theta_i^{b\ell} \equiv \theta_i^{b \ell'} \bmod \PP$, so we may pick $0 \leq \ell' \leq N-1$ such that $\ell' \equiv \ell \bmod N$ and so $\xi_i \theta_i^{a-b\ell'} \equiv 1 \bmod \PP$ as required.
\end{proof}

\subsection{Key examples}
We now show by example that our results are best possible; the assumptions of non-degeneracy or non-weak-degeneracy may not be dropped in \cref{thm:avoiding_TRZ} and \cref{thm:detecting-TRZ}. First, a useful lemma to be used in the examples.
\begin{lemma} \label{lem:even-ord}
Let $a \in \ZZ$, and let $p \in \ZZ$ be an odd prime not dividing $a$. If $\ord_p(a)$ is even, then $\exp(\frac{1}{2} \log a^{\ord_p(a)}) = -a^{\ord_p(a)/2}$.
\end{lemma}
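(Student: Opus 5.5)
Write $k = \ord_p(a)$, so $k$ is even and $a^k \equiv 1 \bmod p$. Since $p$ is odd and unramified in $\QQ_p$, we have $v_p(a^k - 1) \geq 1 > \frac{1}{p-1}$ unless $p = 2$, which is excluded. Hence $\exp(x\log a^k)$ converges for all $x \in \ZZ_p$, and in particular at $x = \frac{1}{2} \in \ZZ_p$.

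The plan is to apply \cref{lem:root-of-unity-unique} with $\KK = \QQ$ and $\pp = p$. We take $\lambda = a^k$, the fraction $\frac{1}{2}$ (so the numerator is $1$ and the denominator is $2$), and $\theta = a^{k/2} \in \ZZ$, which satisfies $\theta^2 = \lambda$. The hypotheses hold: $v_p(\lambda - 1) > \frac{1}{p-1}$ as noted above, $v_p(2) = 0$, $p > 2$, and $p$ is unramified in $\QQ$. The lemma then says
\[
\exp\left(\tfrac{1}{2}\log a^k\right) = \xi\, a^{k/2},
\]
where $\xi \in \{\pm 1\}$ is a square root of unity. Moreover, $\xi$ is the unique root of unity in $\QQ_p$ with $\xi a^{k/2} \equiv 1 \bmod p$.

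It remains to identify $\xi$. Every convergent $p$-adic exponential is $\equiv 1 \bmod p$, as noted in the preliminaries, so $\xi a^{k/2} \equiv 1 \bmod p$. By minimality of $k = \ord_p(a)$, we have $a^{k/2} \not\equiv 1 \bmod p$. On the other hand, $(a^{k/2})^2 \equiv 1 \bmod p$, and $\ZZ/p\ZZ$ is a field, so $a^{k/2} \equiv -1 \bmod p$. Therefore $\xi = 1$ is impossible, since it would give $a^{k/2} \equiv 1 \bmod p$. We conclude $\xi = -1$. As a consistency check, $-a^{k/2} \equiv 1 \bmod p$, as required.

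There is no real obstacle here. The only points needing care are checking the convergence condition $v_p(a^k - 1) > \frac{1}{p-1}$, which uses $p$ odd, and the field argument that forces $a^{k/2} \equiv -1$ rather than merely $\not\equiv 1$. Invoking the uniqueness part of \cref{lem:root-of-unity-unique} is not even necessary: it suffices that $\xi \in \{\pm 1\}$ together with the congruence $\xi a^{k/2} \equiv 1 \bmod p$.
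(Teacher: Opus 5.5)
Your proposal is correct and follows essentially the same route as the paper: apply \cref{lem:root-of-unity-unique} to get $\exp(\frac{1}{2}\log a^{k}) = \pm a^{k/2}$, then use $\exp(\cdot)\equiv 1 \bmod p$ together with the minimality of $\ord_p(a)$ to rule out the $+$ sign. Your extra observation that $a^{k/2}\equiv -1 \bmod p$ is a harmless consistency check and is not needed.
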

\begin{proof}
By \cref{lem:root-of-unity-unique} we have $\exp(\frac{1}{2} \log a^{\ord_p(a)}) = \pm a^{\ord_p(a)/2}$. If $\exp(\frac{1}{2} \log a^{\ord_p(a)}) = a^{\ord_p(a)/2}$, then $a^{\ord_p(a)/2} \equiv 1 \bmod p$, contradicting the definition of $\ord_p(a)$ as the minimal integer for which $a^{\ord_p(a)} \equiv 1 \bmod p$.
\end{proof}

\begin{example} \label{ex:bad-TRZ-example}
Let 
\begin{align*}
    u_n = (1+2^n)(1 + (-4)^n) = 1 + 2^n +(-4)^n + (-8)^n \, .
\end{align*}
Clearly, $u$ is non-degenerate, though it is weakly degenerate as $(-4) \cdot 2^{-2} = -1$, so the multiplicative group $\langle 2, -4, -8\rangle \subseteq \QQ^\times$ has torsion. Furthermore, we observe that $u_0 = 4 \neq 0$.

We show that 0 is a $p$-adic zero of $u$ for every $p>2$. We split into 2 cases.

\textbf{Case 1:} $\ord_p(2)$ is even. Let $N = \ord_p(2)$. Then $(-4)^{N} \equiv (2^{N})^2 \equiv 1 \bmod p$. Also, by \cref{lem:even-ord}, we have $\exp(\frac{1}{2} \log 2^N) = -2^{N/2}$. Consider the $N/2$-th $p$-adic interpolant
\begin{align*}
    F_{N/2}(x) = (1 + 2^{N/2} \exp(x \log 2^N)) ( 1 + (-4)^{N/2} \exp(x \log (-4)^N)) \, ,
\end{align*}
and compute that
\begin{align*}
    F_{N/2}(-1/2) &= (1 + 2^{N/2} \exp(-\frac{1}{2} \log 2^N)) ( 1 + (-4)^{N/2} \exp(-\frac{1}{2} \log (-4)^N)) \\
    &= (1 + 2^{N/2} (-2^{-N/2}))( 1 + (-4)^{N/2} \exp(-\frac{1}{2} \log (-4)^N)) \\
    &= 0 \, .
\end{align*}
Therefore, by \cref{lem:p-adic-zero-def-equal}, this identifies 0 as a $p$-adic zero. 

\textbf{Case 2:} $\ord_p(2)$ is odd. Then $(-4)^{\ord_p(2)} \equiv -(2^{\ord_p(2)})^2 \equiv -1 \bmod p$, so $\ord_p(-4) = 2\ord_p(2)$, so is even. Let $N = \ord_p(-4)$, then by \cref{lem:even-ord}, we have $\exp(\frac{1}{2} \log (-4)^N) = -(-4)^{N/2}$, and therefore we again have $F_{N/2}(-1/2)$, once again proving that 0 is a $p$-adic zero. 
\end{example}

Furthermore, even LRS which are not weakly degenerate may have twisted rational zeros such that at least one is a $p$-adic zero for every prime $p$, despite having no integer zero.

\begin{example} \label{ex:two-TRZs}
Consider the non-weakly-degenerate LRS
\begin{align*}
    u_n = -(4^n+1)(4^n-2)(4^n+2) = 4 + 4\cdot 4^n -16^n -64^n \, .
\end{align*}
There are at least two TRZs of $u$; we have vanishing tuples $(0,1,-1,1,-1)$ for the TRZ 0, and $(1/2,1,1,1,1)$ for the TRZ $1/2$, where we take $4^{1/2} \coloneq 2$. We show that for any prime $p > 2$, either $0$ or $\frac{1}{2}$ arises as a $p$-adic zero of $u$. 

Given prime number $p$, let $N = \ord_p(4)$. Then we have
\begin{align*}
    F_\ell(n) = -(4^\ell\exp(n\log4^N) +1)(4^\ell\exp(n\log 4^N)-2)) (4^\ell\exp(n \log 4^N) +2) \, .
\end{align*}
\textbf{Case 1:} $N$ is even. Then by \cref{lem:even-ord}, we have $\exp(\frac{1}{2} \log(4^N)) = -4^{N/2}$. Therefore we may compute
\begin{align*}
F_{N/2}(- 1/2) = -(-4^{N/2} \cdot 4^{-N/2} +1)(4^{N/2}\cdot 4^{-N/2} -2) (4^\ell \cdot 4^{-N/2} +2) = 0 \, .
\end{align*}
Since $N(-\frac{1}{2}) + \frac{N}{2} = 0$, this identifies $0$ as a $p$-adic zero.

\textbf{Case 2:} $N$ is odd. Then we have
\begin{align*}
    F_{\frac{N+1}{2}}(- 1/2) = -(4^{\frac{N+1}{2}} \exp(-\frac{1}{2} \log4^N) -1)(4^{\frac{N+1}{2}} \exp(-\frac{1}{2} \log 4^N)-2)) (4^{\frac{N+1}{2}} \exp(-\frac{1}{2} \log 4^N) +2) \, .
\end{align*}
Now, $\exp(\frac{1}{2} \log 4^N) = \pm 2^N$, so $4^{\frac{N+1}{2}} \exp(-\frac{1}{2} \log 4^N) = \pm 2$. Either choice of sign makes one of the factors $(4^{\frac{N+1}{2}} \exp(-\frac{1}{2} \log 4^N) - 2)$ or $(4^{\frac{N+1}{2}} \exp(-\frac{1}{2} \log 4^N) +2)$ vanish, so $F_{\frac{N+1}{2}}(-\frac{1}{2}) = 0$, and since $N(-\frac{1}{2}) + \frac{N+1}{2} = \frac{1}{2}$, this identifies $\frac{1}{2}$ as a $p$-adic zero.

This proves the claim. 
\end{example}

\begin{remark}
Recall \cref{conj:skolem-conj} in the case of $\QQ$-LRS. \cref{ex:two-TRZs} shows that even for non-weakly-degenerate LRS, if we strengthen the conjecture to enforce that $m$ is a prime power, then the statement fails.
\end{remark}

\section{A simultaneous exponential local-global principle} \label{sec:local-global}
Given a non-weakly-degenerate $\KK$-LRS $u$, whose characteristic roots are generated by multiplicatively independent algebraic numbers $\mu_1, \dots, \mu_r$ lying in a number field $\LL$, recall that one may find a multivariate Laurent polynomial $P \in \LL[X_0^{\pm 1},\dots,X_r^{\pm 1}]$ such that $u_n = P(n,\mu_1^n, \dots, \mu_r^n)$. If we have two LRS $u,v$, which may be written in the form $u_n = P(n, \mu_1^n, \dots, \mu_r^n)$ and $v_n = Q(n, \mu_1^n, \dots, \mu_r^n)$, where $P,Q \in \LL[X_0^{\pm 1}, \dots, X_r^{\pm 1}]$ are coprime Laurent polynomials, then we say that $u$ and $v$ are coprime. The following was proven in \cite{bacik_p-adic_2026}.\footnote{The particular result is Theorem 11 (ii) of the arXiv version \cite{bacik_p-adic_2026}. The conference version \cite{bacik_p-adic_conference_2026} has an error which the arXiv version corrects.}
\begin{theorem} \label{thm:simultaneous}
Suppose $u,v$ are coprime non-weakly-degenerate $\KK$-LRS. Then, assuming the $p$-adic Schanuel Conjecture, for any prime ideal $\pp$, the only common $\pp$-adic zeros of $u,v$ are rational. 
\end{theorem}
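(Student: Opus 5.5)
The plan is to reduce a common $\pp$-adic zero to a common zero of two $p$-adic analytic functions in the variables $\mu_1^x,\dots,\mu_r^x$. Then I would play the two algebraic relations coming from coprimality of $P,Q$ against the transcendence degree lower bound given by the $p$-adic Schanuel Conjecture.

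Let $x\in\ZZ_p$ be a common $\pp$-adic zero, and fix a prime $\PP$ of $\LL$ above $\pp$ (allowed by \cref{lem:pp-adic-to-PP-adic}). Fix $N$ with $v_\PP(\mu_i^N-1)>\frac{e_{\PP/p}}{p-1}$ for all $i$. By \cref{lem:p-adic-zero-def-equal} there are $y\in\ZZ_p$ and $0\le \ell\le N-1$ with $Ny+\ell=x$ and
\begin{align*}
P(x,z_1,\dots,z_r)=0, \qquad z_i:=\mu_i^\ell\exp(y\log\mu_i^N).
\end{align*}
The same lemma, applied to $v$, gives a possibly different pair $(y',\ell')$ with $Ny'+\ell'=x$. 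Substituting $y'=y+(\ell-\ell')/N$, the corresponding values $z_i'$ differ from the $z_i$ at most by roots of unity coming from $\exp(\frac{\ell-\ell'}{N}\log\mu_i^N)$. After enlarging $N$ (say, replacing it by a suitable multiple, or passing to a $p$-power refinement), I would arrange $z_i'=z_i$, so that also
\begin{align*}
Q(x,z_1,\dots,z_r)=0.
\end{align*}
If $x\in\QQ$ we are done, so suppose $x\notin\QQ$, equivalently $y\notin\QQ$.

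Next comes the upper bound on transcendence degree. Set $L_i=\log\mu_i^N$. These are $\QQ$-linearly independent, since the $\mu_i$ are multiplicatively independent. Consider the field
\begin{align*}
E=\QQ(y,\,L_1,\dots,L_r,\,yL_1,\dots,yL_r,\,\mu_1^N,\dots,\mu_r^N,\,\exp(yL_1),\dots,\exp(yL_r)).
\end{align*}
The $\mu_i^N$ are algebraic, and the $yL_i$ lie in $\QQ(y,L_i)$. Moreover $(x,z)$ is a common zero of the coprime Laurent polynomials $P,Q$ in $r+1$ variables, so the common zero locus has dimension at most $r-1$, giving $\operatorname{trdeg}\QQ(y,z)\le r-1$. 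Since the $\exp(yL_i)$ are algebraic multiples of the $z_i$, this yields $\operatorname{trdeg}E\le r+(r-1)=2r-1$.

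For the lower bound, let $d$ be the dimension of the $\QQ$-span $V$ of $\{L_i,yL_i\}$. Applying the $p$-adic Schanuel Conjecture to a basis of $V$ gives $\operatorname{trdeg}E\ge d$. If $d=2r$, this contradicts the upper bound $2r-1$, so $y\in\QQ$.

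The main obstacle is the degenerate case $d<2r$. Here some nontrivial relation $\sum a_iL_i=y\sum b_iL_i$ holds; by independence of the $L_i$ this forces $y$ to be algebraic, indeed an eigenvalue-type quantity of a rational matrix, and irrational by assumption. In this case I would redo the count with $y$ algebraic. Now $\operatorname{trdeg}\QQ(z)\le r-1$ must hold, because $x$ is algebraic and $P(x,\cdot),Q(x,\cdot)$ remain coprime after specialising generically. On the other hand, Schanuel applied to a basis of $V$ gives $\operatorname{trdeg}\ge d\ge r+1$: if $y$ is irrational, then $y\cdot\operatorname{span}(L_i)\neq \operatorname{span}(L_i)$ unless $y$ has degree at most $r$, and even then $V$ strictly contains $\operatorname{span}(L_i)$. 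I expect the care here to be in the specialisation step, namely checking that coprimality survives substituting the algebraic value $x$ into the first variable. If it fails, I would instead use the $\operatorname{Gal}$-conjugate points $(x^\sigma,z^\sigma)$ to produce extra relations and obtain a contradiction again.
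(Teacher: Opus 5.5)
\textbf{Verdict: there is a genuine gap.} The paper imports this statement from \cite{bacik_p-adic_2026} and gives no proof, so I am judging your argument on its own terms. Your treatment of the case $d=2r$ is correct. The degenerate case, however, rests on a false claim, and that case is where the content of the theorem lies.

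A relation $\sum a_iL_i=y\sum b_iL_i$ only says that $y=\langle a,L\rangle/\langle b,L\rangle$ is a ratio of two rational linear forms in the logarithms. $\QQ$-independence of the $L_i$ does not make this algebraic, and under Schanuel it is transcendental unless $a\parallel b$. This is the typical situation, not an exotic one. If $\mu=(2,3)$ and $u_n=3^n-8$ (with $2^n$ occurring in $v$), every irrational $\pp$-adic zero of $u$ satisfies $NyL_2=3L_1-\ell L_2$. So $d=2r-1$, and your count gives only $2r-1\le\operatorname{trdeg}E\le 2r-1$, which is no contradiction. Your algebraic-$y$ count does not close either: $E$ contains the $L_i$, so the relevant upper bound is $r+\operatorname{trdeg}\QQ(z)$, and $d\ge r+1$ gives nothing.

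\textbf{How to close the degenerate case.} Apply Schanuel to $L_1,\dots,L_r$ alone to get that they are algebraically independent. Two consequences follow:
\begin{itemize}
\item An algebraic irrational $y$ already forces $d=2r$, because a relation would be a nontrivial $\Alg$-linear relation among the $L_i$.
\item Two independent relations $(a,b),(a',b')$ force $y\in\QQ$. Cross-multiplying gives $\langle a,X\rangle\langle b',X\rangle=\langle a',X\rangle\langle b,X\rangle$ in $\QQ[X_1,\dots,X_r]$, and unique factorisation yields $a\parallel b$ or $(a',b')\parallel(a,b)$.
\end{itemize}
So the only remaining case is $d=2r-1$ with $y\in\QQ(L_1,\dots,L_r)$ transcendental. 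There $E$ is algebraic over $\QQ(L,z)$. Since $x=Ny+\ell$ lies in $\QQ(L)$, you must not count it twice:
$\operatorname{trdeg}\QQ(L,z)\le r+\operatorname{trdeg}(\QQ(x,z)/\QQ(x))\le r+(r-2)=2r-2<2r-1$.
This uses $\operatorname{trdeg}\QQ(x,z)\le r-1$ (from coprimality) and the transcendence of $x$. It gives the contradiction, and the specialisation and Galois-conjugate manoeuvres you anticipate are not needed.

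\textbf{The branch-alignment step is also wrong as stated.} The values $\mu^\ell\exp(\frac{x-\ell}{N}\log\mu^N)$, for $\ell\equiv x \bmod p^{v_p(N)}$, depend only on $\ell \bmod N$. They form the same finite set (an orbit under a fixed vector of roots of unity) for every admissible $N$, so no enlargement of $N$ can force $z'=z$.

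In fact, if $u$ and $v$ are allowed to vanish at $x$ along different integer sequences, the statement is false. Take $u_n=2^n-3$, $v_n=-2^n-3$, $p=5$ and $N=4$. The irrational point $x=3+4\log(3/8)/\log 16$ satisfies both $F^u_3(\frac{x-3}{4})=0$ and $F^v_1(\frac{x-1}{4})=0$.

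The statement must therefore be read, as it is used in the proof of \cref{thm:sim-local-global}, with a single sequence $n_j\to x$ along which both $u_{n_j}$ and $v_{n_j}$ tend to $0$. Pigeonholing $n_j \bmod N$ then gives a common $\ell$ at once, and nothing needs aligning.
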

From our previous theorems, we may use this theorem to deduce an exponential local-global principle for simultaneous zeros of coprime LRS. Let $\KK$ be a number field, and let $S$ be a finite set of prime ideals of $\KK$ and coonsider the ring $\O_S$ of $S$-integers of $\KK$.

Suppose $u$ and $v$ are LRS with exponential polynomial forms
\begin{align*}
    u_n = \sum_{i=1}^{s} P_i(n) \lambda_i^n \, , \qquad v_n = \sum_{i=1}^{s} Q_i(n) \lambda_i^n
\end{align*}
where $P_i,Q_i \in \O_S[x]$ (note they can be identically zero) and $\lambda_1, \dots, \lambda_s,\lambda_1^{-1}, \dots, \lambda_s^{-1} \in \O_S$. 
\begin{theorem}[Simultaneous Exponential Local-Global Principle] \label{thm:sim-local-global}
Suppose $u,v$ are coprime non-weakly-degenerate $\KK$-LRS as above. Then, subject to the $p$-adic Schanuel Conjecture, there exists $n \in \ZZ$ with $u_n = v_n = 0$ if and only if for all non-zero ideals $\mathfrak{a}$ of the ring $\O_S$, there exists $n \in \ZZ$ with $u_n \equiv v_n \equiv 0 \mod \mathfrak{a}$.

Furthermore, we may take $\mathfrak a = (\pp \qq)^m$ for some integer $m \geq 1$ and prime ideals $\pp,\qq \subseteq \O_\KK$.
\end{theorem}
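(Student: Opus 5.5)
The forward direction is immediate: if $u_n = v_n = 0$ for some $n \in \ZZ$, then the same $n$ gives $u_n \equiv v_n \equiv 0 \bmod \mathfrak a$ for every ideal $\mathfrak a$. For the converse, I argue by contraposition. Assume $u,v$ have no common integer zero. I will produce prime ideals $\pp,\qq$ and an $m \geq 1$ such that $(u_n,v_n) \not\equiv (0,0) \bmod (\pp\qq)^m$ for all $n \in \ZZ$. This also gives the ``furthermore'' clause.

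\textbf{Step 1: choosing the primes.} Both $u$ and $v$ are non-weakly-degenerate, so they are non-degenerate. By \cref{thm:finitely-many-primitive-tuples} each has finitely many TRZs. Let $Y_u, Y_v$ be the sets of non-trivial TRZs that are not integer zeros. I apply \cref{thm:avoiding_TRZ} with $U = \{u,v\}$. The characteristic roots of both lie in the torsion-free group generated by $\mu_1,\dots,\mu_r$; I take the common representation used in the definition of coprime, and I expect this group to be torsion-free. Item 2 then gives a positive density of primes $\pp$ at which no integer TRZ in $Y_{u,2}$ is a $\pp$-adic zero of $u$, and likewise for $v$. Item 1 gives a positive density of primes $\qq$ at which no non-integer TRZ in $Y_{u,1}, Y_{v,1}$ is detected. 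I choose one such $\pp$ and one such $\qq$, both admissible, so that $u_n, v_n \in \O_{\pp}$ and $\O_\qq$ respectively.

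\textbf{Step 2: common $\pp$-adic zeros.} Let $x$ be a common $\pp$-adic zero of $u$ and $v$. By \cref{thm:simultaneous}, under the $p$-adic Schanuel Conjecture, $x \in \QQ$, so by \cref{lem:rat-zero-is-TRZ} it is a TRZ of both sequences. Trivial TRZs need separate handling. If $x$ were a trivial TRZ of both, then every $P_i(x) = 0$ and every $Q_i(x) = 0$. Hence $X_0 - x$ (clearing denominators) would divide both Laurent polynomials $P$ and $Q$, contradicting coprimality. So $x$ is non-trivial for at least one sequence, say $u$. If $x \in \ZZ$, then $u_x \neq 0$ or $v_x \neq 0$; choose $u$ or $v$ accordingly. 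The case where $x$ is trivial for one sequence and an integer zero of the other needs more care, and I would again use coprimality there. In the non-integer case, $x$ cannot be detected at $\qq$. In the integer case where $x$ is not a genuine zero, it cannot be detected at $\pp$. The conclusion is that $u$ and $v$ have no common $\pp$-adic zero in $\ZZ_p$ that is an integer, and no common $\qq$-adic zero that is a non-integer rational. Applying \cref{thm:simultaneous} at both primes, every common zero at either prime is rational. A common integer zero of both sequences has been excluded by assumption. I expect the cleanest route is to show that, at one of the two primes, there are no common zeros at all; this is also the step I consider most delicate.

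\textbf{Step 3: compactness to a finite modulus.} I treat the two primes separately and combine via CRT. Suppose that for every $m$ there is $n_m$ with $u_{n_m} \equiv v_{n_m} \equiv 0 \bmod (\pp\qq)^m$. By compactness of $\ZZ_p \times \ZZ_q$, a subsequence of $n_m$ converges $p$-adically to some $x$ and $q$-adically to some $x'$. By continuity of the interpolants (\cref{lem:p-adic-zero-def-equal}), $x$ is a common $\pp$-adic zero and $x'$ is a common $\qq$-adic zero. Step 2 forces $x$ to be a non-integer rational and $x'$ to be an integer rational. Both are TRZs and are rational, but they come from the same integer sequence $n_m$. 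The main obstacle is deriving a contradiction here. I would first try the following: since $x \notin \ZZ$ but is a limit of integers $p$-adically, the denominator $b$ of $x$ is coprime to $p$. Then I would strengthen the choice in Step 1 using \cref{thm:perucca} and \cref{thm:sgobba} jointly, choosing $\pp$ to avoid integer and non-integer TRZs simultaneously for the specific finite set of common TRZs. Ordinary TRZ detection is not the issue here; what needs to be ruled out is common detection by both sequences, and \cref{ex:two-TRZs} concerns a single sequence. If this is not available, I would split by which sequence the TRZ is non-trivial for, and use the second prime only for that sequence.
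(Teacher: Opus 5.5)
Your proposal has a genuine gap exactly where you flag it. Step 3 never produces the contradiction, and if $\pp$ and $\qq$ are chosen independently (item 2, respectively item 1, of \cref{thm:avoiding_TRZ} applied to $u,v$ themselves), no contradiction exists. Since $\ZZ$ is dense in $\ZZ_p\times\ZZ_q$, a non-integer common $\pp$-adic zero $x$ and an integer common $\qq$-adic zero $x'$ can both be approached by a single integer sequence. The only constraints are residue conditions modulo the two interpolation periods, and nothing forces those to clash.

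Concretely, take $u_n=-(4^n+1)(4^n-2)(4^n+2)$ from \cref{ex:two-TRZs} and $v_n=n(2n-1)$. These are coprime, not weakly degenerate, and have no common integer zero. By lifting the exponent, $v_7(u_n)=1+v_7(2n-1)$ if $n\equiv 2 \bmod 3$ and $v_7(u_n)=0$ otherwise. Likewise $v_5(u_n)=1+v_5(n)$ if $n$ is odd and $v_5(u_n)=0$ otherwise. So the only $7$-adic zero of $u$ is $1/2$, and the only $5$-adic zero of $u$ is $0$. Thus $\pp=(7)$ is a legitimate output of item 2 and $\qq=(5)$ a legitimate output of item 1. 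Yet any $n$ with $n\equiv 5 \bmod 6$, $2n\equiv 1 \bmod 7^m$ and $n\equiv 0\bmod 5^m$ satisfies $(u_n,v_n)\equiv(0,0)\bmod 35^m$, for every $m$. This is precisely your Step 3 configuration, with $x=1/2$ and $x'=0$.

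Your fallbacks also fail on this example. Both TRZs are non-trivial only for $u$, so splitting by sequence changes nothing. Moreover, as the remark following the proof of \cref{thm:sim-local-global} shows, every odd prime detects $0$ or $1/2$ as a common zero, so no single prime avoids both.

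The missing idea is that $\qq$ must be chosen after $\pp$ and depending on it, by applying item 1 to subsequences rather than to $u,v$. The paper proceeds as follows.
\begin{enumerate}
\item Take $\pp$ from item 2.
\item Choose $r$ such that $v_\pp(n-y)\geq r$, for some integer common TRZ $y$, already bounds $\min\{v_\pp(u_n),v_\pp(v_n)\}$, and set $N=p^r$.
\item Residue classes $\ell \bmod N$ containing such a $y$ are settled at $\pp$.
\item For the remaining classes, the subsequences $\langle u_{Nn+\ell}\rangle$ and $\langle v_{Nn+\ell}\rangle$ have only non-integer common TRZs $(y-\ell)/N$, because every integer TRZ has acquired a power of $p$ in its denominator.
\item Item 1 applied to this finite family yields $\qq$. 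Through \cref{lem:divisibility-condition}, the relevant orders $\ord_\qq$ are divisible by a high power of $p$, so $\qq$ can detect an integer TRZ only along classes modulo $p^r$ that are already handled at $\pp$.
\item \cref{thm:simultaneous} at $\qq$, together with your compactness argument, then covers integers far from all TRZs.
\end{enumerate}

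In the example, $q=29$ works with $p=7$, because $\ord_{29}(4)=14$ is divisible by $7$. If $7\mid n$, then $v_7(u_n)\leq 1$. If $7\nmid n$, then $u_n\not\equiv 0 \bmod 29$: the values $\pm 2$ are non-residues mod $29$, and $4^n\equiv -1$ forces $n\equiv 7 \bmod 14$. Hence $(u_n,v_n)\not\equiv(0,0)\bmod (7\cdot 29)^2$.

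Separately, the case you leave open in Step 2 is immediate. An integer trivial TRZ $x$ of $v$ has $v_x=\sum_i Q_i(x)\lambda_i^x=0$, which together with $u_x=0$ gives a common integer zero.
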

\begin{proof}
One direction is trivial; if $u_n = v_n = 0$ then clearly for any ideal $\mathfrak{a}$ of $\O_S$, we have $u_n \equiv v_n \equiv 0 \mod \mathfrak{a}$.  

For the other direction, suppose there is no $n \in \ZZ$ with $u_n = v_n = 0$.

By \cref{thm:simultaneous}, for any prime ideal $\pp \subseteq \O_\KK$, any $\pp$-adic zero shared between $u$ and $v$ is rational, so is in particular, a TRZ, by \cref{lem:rat-zero-is-TRZ}. Any shared TRZ must be a non-trivial TRZ of at least one of $u$ or $v$. Indeed, if $y$ is a trivial TRZ of both $u$ and $v$, then $(n-y)$ is a common factor of $u_n, v_n$, contradicting the hypothesis that $u,v$ are coprime. 

Let $Y_u$ be the set of all non-trivial TRZs of $u$ which are also TRZs of $v$ and not genuine integer zeros of $u$, and $Y_v$ be the analogous set for $v$. Define $Y_{u,1} = Y_u \setminus \ZZ$, $Y_{u,2} = Y_u \cap \ZZ$ and $Y_{v,1} = Y_v \setminus \ZZ$, $Y_{v,2} = Y_v \cap \ZZ$. Finally, define $Y_1 = Y_{u,1} \cup Y_{v,1}$, $Y_2 = Y_{u,2} \cup Y_{v,2}$ and $Y = Y_1 \cup Y_2$. Note that $Y$ is finite by \cref{thm:finitely-many-primitive-tuples}, and contains all the common TRZs of $u$ and $v$.

By \cref{thm:avoiding_TRZ}, there exist infinitely primes $\pp \subseteq \O_\KK$ not lying in $S$ (as $S$ is finite) such that no element of $Y_{u,2}$ is a $\pp$-adic zero of $u$, and no element of $Y_{v,2}$ is a $\pp$-adic zero of $v$. 

Now, since each $y \in Y_2$ is not a $\pp$-adic zero of at least one of $u$ or $v$, this implies that there is a neighbourhood of $y$ on which the minimum valuation of $u$ or $v$ are bounded. Indeed, consider the case that $y \in Y_{u,2}$ (the case $y \in Y_{u,1}$ is symmetric). Then since $y$ is not a $\pp$-adic zero of $u$, there exist integers $r_y,m_y \geq 0$ such that for every $n \in \ZZ$, if $v_\pp(n-y) \geq r_y$, we have $v_\pp(u_n) < m_y$. Indeed, suppose to the contrary that for all $j \in \NN$, there exists integer $n_j \in \ZZ$ with $v_\pp(n_j-y) \geq j$ and $v_\pp(u_{n_j}) \geq j$. Then $v_\pp(n_j-y) \to \infty$ and $v_\pp(u_{n_j}) \to \infty$ as $j \to \infty$; this is exactly the definition of $y$ being a $\pp$-adic zero of $u$, so this gives a contradiction.

Repeat this for every $y \in Y_2$, and let $m = \max\{m_y \mid y \in Y_2\}$ and $r = \max\{ r_y \mid y \in Y_2\}$. Then we conclude that whenever there is some $y \in Y_2$ for which $v_\pp(n-y) \geq r$, we have $\min \{ v_\pp(u_n), v_\pp(v_n)\} < m$.

Now, let $N = p^r$. Then for any $y \in Y_2$, there is a unique $0 \leq \ell_y \leq N-1$ for which there is $n_y \in \ZZ$ with $Nn_y + \ell_y = y$. Now, for each $y \in Y_2$, for every $n \in \ZZ$ we have $N n + \ell_y \equiv y \mod p^r$, so $v_\pp(Nn+\ell_y - y) \geq r$, and therefore $\min\{v_\pp(u_{Nn+\ell_y}) , v_\pp(v_{Nn+\ell_y})\} < m$. 

Thus, we only need to deal with the arithmetic progressions $\langle Nn+\ell \rangle_{n= -\infty}^\infty$ that do not contain any $y \in Y_2$. Let $L$ be the set of integers $0 \leq \ell \leq N-1$ such that the arithmetic progression $\langle Nn+\ell \rangle_{n= -\infty}^\infty$ does not contain any $y \in Y_2$. 

Crucially, for each such $\ell \in L$, each pair of subsequences $\langle u_{Nn+\ell} \rangle_{n=0}^\infty, \langle v_{Nn+\ell} \rangle_{n=0}^\infty$ for $\ell \in L$ has \emph{only} non-integer common TRZs, by definition of $L$. Therefore, by \cref{thm:avoiding_TRZ}, there exists infinitely many prime ideals $\qq \subseteq \O_\KK$ such that every such subsequence $\langle u_{Nn+\ell} \rangle_{n=0}^\infty , \langle v_{Nn+\ell} \rangle_{n=0}^\infty$ for every $\ell \in L$ has no TRZ that is also a $\qq$-adic zero of the respective sequence -- choose such a $\qq$ distinct from $\pp$.

In the case that $n$ is $\qq$-adically close to a TRZ, we use the definition of $\qq$ to bound $\min\{v_\qq(u_n), v_\qq(v_n)\}$. By the same argument as earlier, there are integers $r',m' \geq 0$ such that for all $\ell \in L$, if $y'$ is a TRZ of the subsequence $ \langle u_{Nn+\ell} \rangle_{n=0}^\infty$, then for every $n \in \ZZ$, if $v_\qq(n - y') \geq r'$, then $\min\{v_\qq(u_{Nn+\ell}), v_\qq(v_{Nn+\ell})\} < m'$. Note that for any $y \in Y_u$, we have $\frac{y-\ell}{N}$ is a TRZ of the subsequence $\langle u_{Nn+\ell} \rangle_{n=0}^\infty$ (and analogously for $y \in Y_v$ and $\langle v_{Nn+\ell} \rangle_{n=0}^\infty$). Therefore, we conclude that if $v_\qq(n-y) \geq r'$, then $v_\qq(\frac{n-\ell}{N} - \frac{y-\ell}{N}) \geq r'$ and so $\min\{v_\qq(u_n),v_\qq(v_n)\} < m'$.

Now, in the case where $n$ is $\qq$-adically far from any TRZ $y \in Y$, we use the fact that by \cref{thm:simultaneous}, the only common $\qq$-adic zeros of $u,v$ are TRZs, and hence in $Y$. We claim that there is an integer $\tilde m$ such that for all $n \in \ZZ$, if $v_\qq(n - y) \leq r'$ for all $y \in Y$, we have $\min\{v_\qq(u_n),v_\qq(v_n) \} < \tilde m$. Indeed, suppose no such $\tilde m$ existed, then let $q$ be the rational prime lying below $\qq$ and define the set
\begin{align*}
    B = \bigcap_{y \in Y} \{x \in \ZZ_q : v_\qq(x-y) \leq r' \} \, .
\end{align*}
Since $B$ is a finite intersection of compact sets, $B$ is compact. Now, by assumption there must exist a sequence of integers $n_j \in \ZZ$ such that $n_j \in B$ and $\min\{v_\qq(u_{n_j}),v_\qq(v_{n_j})\} \geq j$ for each $j \in \NN$. By compactness of $B$, there exists a convergent subsequence $\langle n_j\rangle_{j \in J}$, where $J \subseteq \NN$, converging to some $x \in B$. Therefore, we have $v_\qq(n_j - x) \to \infty$ and $\min\{v_\qq(u_{n_j}),v_\qq(v_{n_j})\} \to \infty$, as $j \to \infty$ for $j \in J$. This means $x$ is a $\qq$-adic zero of both $u$ and $v$, yet does not lie in $Y$ by definition of $B$. This contradicts the fact that $Y$ contains all possible common $\qq$-adic zeros of $u$ and $v$, and this proves the claim.

This covers all cases. Indeed, for all $\ell \not\in L$, we have $\min\{v_\pp(u_{Nn+\ell}),v_\pp(v_{Nn+\ell})\} < m $. For all $\ell \in L$, we have $\min\{ v_\qq(u_{Nn+\ell}) , v_\qq(v_{Nn +\ell}) \} < \max\{m', \tilde m\}$. Letting $M = \max\{m,m',\tilde m\}$, we conclude that $(u_n, v_n) \not\equiv (0,0) \mod (\pp \qq)^M$ for all $n \in \ZZ$.
\end{proof}

Note that the theorem does not hold if the ideals $\mathfrak{a}$ are restricted to powers of prime ideals. Indeed, let $u$ be the LRS defined in \cref{ex:two-TRZs}, and $v_n = n(2n-1)$. Recall from \cref{ex:two-TRZs} that for each prime $p>2$, if $N \coloneq \ord_p(4)$ is even, then 0 is a $p$-adic zero of $u$. By \cref{lem:p-adic-zero-def-equal}, this may be witnessed by taking a sequence of integers $n_j \in \NN$ such that $v_p(n_j+\frac{1}{2}) \to \infty$ as $j \to \infty$, then we have $v_p(Nn_j+\frac{N}{2}) \to \infty$ and $v_p(u_{Nn_j+\frac{N}{2}}) \to \infty$ as $j \to \infty$. For such a sequence, we have $v_p(v_{Nn_j+\frac{N}{2}}) \geq v_p(Nn_j + \frac{N}{2}) \geq v_p(n_j + \frac{1}{2}) \to \infty$ as $j \to \infty$, therefore 0 is a $p$-adic zero of both $u$ and $v$. By the same argument, if $\ord_p(4)$ is odd, then $1/2$ is a $p$-adic zero of both $u$ and $v$.

Therefore, $u$ and $v$ have a simultaneous $p$-adic zero for every $p >2$, and so for all integers $k \geq 0$, there is an $n \in \NN$ for which $u_n \equiv v_n \equiv 0 \bmod p^k$.

\section{Closing remarks} \label{sec:conclusion}
\subsection{Effectivity}
For simplicity, we discuss the case of $\QQ$-LRS here. It is plausible that \cref{thm:avoiding_TRZ} and \cref{thm:detecting-TRZ} could be made effective, in the sense that given a TRZ $y \in \QQ$ (that is not an integer zero) of a non-weakly-degenerate $\QQ$-LRS $u$, one could get an effective upper bound $M > 0$ such that there exist primes $p,p' \leq M$ for which $y$ is a $p$-adic zero and is not a $p'$-adic zero.

Indeed, the denominators of TRZs, and the orders of the accompanying roots of unity may be effectively bounded by \cite[Proposition 4.5]{bilu_twisted_2025}. The upper bound $M$ may then be derived from effective versions of \cref{thm:perucca} and \cref{thm:sgobba}. Regarding the latter, \cref{thm:sgobba} is derived from \cite[Theorem 21]{sgobba_divisibility_2023}, which is already effective. Regarding the former, we do not know if an effective version of \cref{thm:perucca} exists. However, the proof is based on Chebotarev's Density Theorem, which may be replaced by an effective version (e.g. \cite{thorner_unified_2019}), so it is plausible an effective version of the theorem could be derived in the context we use it in, though checking that the details would carry through in the correct way is outside the scope of this paper.

Making \cref{thm:avoiding_TRZ} effective would mean that \cref{thm:sim-local-global} could be strengthened to the following statement: let $u,v$ be coprime non-weakly-degenerate $\QQ$-LRS, then subject to the $p$-adic Schanuel conjecture, there is an effective upper bound on an integer $m$ such that there is no $n \in \ZZ$ with $u_n=v_n=0$ if and only if there exists $k \geq 1$, for which $u_n, v_n \not\equiv 0 \bmod m^k$ for all $n \in \ZZ$.

Define the simultaneous Bi-Skolem Problem for coprime LRS to be the following problem: given coprime LRS $u,v$, determine whether there exists $n \in \ZZ$ such that $u_n = v_n = 0$. This is known to be decidable subject to the $p$-adic Schanuel Conjecture \cite{bacik_p-adic_2026}, however, no further complexity upper bound is known.

If one could also derive an effective upper bound on $k$, then this would lead to a non-trivial complexity upper bound for this problem. Indeed, for each $m,k$, the sequences $\langle u_n \bmod m^k \rangle_{n=0}^\infty$ and $\langle v_n \bmod m^k \rangle_{n=0}^\infty$ are periodic; one may easily compute the period and therefore decide whether there is $n \in \NN$ with $u_n \equiv v_n \equiv 0 \bmod m^k$. Then to decide the Simultaneous Bi-Skolem Problem for $u$ and $v$, one simply computes for each $m,k$ below the bound whether there exists $n \in \NN$ with $u_n \equiv v_n \equiv 0 \bmod m^k$; if there is, then $u,v$ must have a simultaneous integer zero $n \in \ZZ$. Otherwise, if there is $m,k$ for which $(u_n,v_n) \not\equiv (0,0) \bmod m^k$, this is a witness for the fact that $u,v$ have no simultaneous zeros. 

However, deriving such an upper bound seems like a difficult problem, and we leave this as an open question.

\subsection{Other Local-Global Principles}
We give a naive heuristic argument for why we should expect the exponential local-global principle (\cref{conj:skolem-conj}) to be true. Let
\begin{align*}
    u_n = \sum_{i=1}^d \alpha_i \lambda_i^n \, ,
\end{align*}
and suppose the period of $u_n \bmod p$ is $N_p$. Let the characteristic polynomial of $u$ be $g \in \ZZ[X]$, then by Chebotarev's Density Theorem, there are infinitely many primes $p$ for which the polynomial $g(X^2)$ splits into linear factors over $\ZZ_p$. For such $p$, the multiplicative order modulo $p$ of each root of $g(X^2)$ divides $p-1$, therefore $\ord_p(\lambda_i) \mid \frac{p-1}{2}$ for each $i=1,\dots, s$, so $N_p \leq \frac{p-1}{2}$. 

For such primes, the reduced sequence $(u_n \bmod p)$ reaches at most $N_p$ elements of $\faktor{\ZZ}{p\ZZ}$. We make two assumptions, which we expect to hold in the absence of ``obstructions'' (we discuss what an obstruction should be later). First, for such primes $p$, we assume that the elements of $\faktor{\ZZ}{p\ZZ}$ reached behave randomly, such that the probability of 0 appearing in the reduced sequence is at most $N_p/p \leq 1/2$. Secondly, we assume that the event of 0 appearing in the reduced sequence is independent for different $p$. This entails that $0$ appearing in the reduced sequence $(u_n \bmod p)$ for $k$ different primes $p$ for which $g(X^2)$ splits in $\ZZ_p$ is at most $(1/2)^k$, so the probability of this occurring for every such prime is 0. Therefore, we expect that there should be some prime $p$ for which $u_n \not\equiv 0 \bmod p$. Note that requiring $u$ to be simple is mandatory for this heuristic argument, as the polynomial term $n$ has period $p$ for every prime $p$, which is too large for the argument to work.

The assumptions made in this heuristic argument are in the absence of ``obstructions''. An obstruction is some reason why these probabilistic assumptions cannot hold. For example, if $u$ has an integer zero $n$, then we certainly have $u_m \equiv 0 \bmod p$ for all $m \equiv n \bmod N_p$. Another obstruction is the existence of twisted rational zeros; we have seen in \cref{ex:two-TRZs} an LRS $u$ with zeros mod $p$ for every prime $p > 2$, due to having two TRZs. However, in the case where an LRS has TRZs but no integer zeros, we expect that one should be able to combine multiple prime ideals as in the proof of \cref{thm:sim-local-global}, hence why we expect Skolem's conjecture to hold. 

We conjecture that TRZs are the \emph{only} possible obstructions to this argument. In particular, we formulate the following conjecture, which we phrase in terms of LRS over number fields for full generality. Let $\KK$ be a number field, let $S$ be a finite set of places of $\KK$, and let
\begin{align*}
    u_n = \sum_{i=1}^d \alpha_i \lambda_i^n
\end{align*}
be a \emph{simple} LRS with $\alpha_1,\dots, \alpha_d, \lambda_1, \dots, \lambda_d, \lambda_1^{-1}, \dots, \lambda_d^{-1} \in \O_S$.
\begin{conjecture}[Exponential Local-Global Principle for TRZs] \label{conj:conj-1}
Suppose that for every non-zero prime ideal $\pp \subseteq \O_S$ there exists $n \in \ZZ$ with $u_n \equiv 0 \bmod \pp$. Then $u$ has a TRZ.
\end{conjecture}
One may also formulate a weaker conjecture, by replacing the prime ideals by all powers of prime ideals.
\begin{conjecture}[Exponential Local-Global Principle for TRZs (Prime Power Version)] \label{conj:conj-2}
Suppose that for every non-zero prime ideal $\pp \subseteq \O_S$ and every integer $k \geq 1$ there exists $n \in \ZZ$ with $u_n \equiv 0 \bmod \pp^k$. Then $u$ has a TRZ.
\end{conjecture}
\cref{conj:conj-2} is equivalent to the statement that if $u$ has a $\pp$-adic zero for every $\pp \subseteq \O_S$, then $u$ has a TRZ. 

We are unable to determine whether this conjecture implies Skolem's conjecture, though we expect that a proof of our conjecture would contain the necessary ideas to prove Skolem's conjecture, perhaps using \cref{thm:avoiding_TRZ} as in the proof of \cref{thm:sim-local-global}. On the other hand, a counterexample would also be very interesting, as we expect it should reveal some further obstruction to the heuristics outlined earlier that we do not currently know about.

\paragraph*{AI Declaration}
ChatGPT 5.6 Sol was used for literature search, and for reviewing the paper for mathematical and writing errors/typos. A non-trivial error was found by ChatGPT in an early version of the proof of \cref{thm:sim-local-global}, which was subsequently corrected by the author. A mild simplification of the last part of the proof of \cref{thm:detecting-TRZ} was also suggested by ChatGPT, and incorporated into the final draft. Everything else, and all text in the article is human-generated.

\bibliography{main.bib}
\end{document}